\newtheorem{example}{Example}[section]
\newtheorem{theorem}{Theorem}[section]
\newtheorem*{theorem*}{Theorem}
\newtheorem{corollary}{Corollary}[section]
\newtheorem{lemma}{Lemma}[section]
\crefname{lemma}{Lemma}{lemma}
\crefname{remark}{Remark}{remark}
\crefname{corollary}{Corollary}{corollary}
\crefname{theorem}{Theorem}{theorem}
\crefname{proposition}{Proposition}{proposition}
\crefname{example}{Example}{example}
\crefname{definition}{Definition}{definition}
\crefname{notation}{Notation}{notation}
\crefname{appendix}{Appendix}{appendix}
\crefname{section}{Section}{section}
\newcommand{\AAA}{\mathfrak A}
\newcommand{\BBB}{\mathcal B}
\newcommand{\HHH}{\mathcal H} 
\newcommand{\PPP}{\mathscr P}
\newcommand{\C}{\mathbb C} 
\newcommand{\R}{\mathbb R}  
\newcommand{\N}{\mathbb N} 
\newcommand{\titleinfo}{Wigner-Type Theorem on transition probability preserving maps in semifinite factors}
\newcommand{\titleinfoshort}{Wigner-Type Theorem}
\newcommand{\authorinfo}{Wenhua Qian$^1$, Liguang Wang$^2$, Wenming Wu$^1,^\dag$, Wei Yuan$^{3,4}$}
\begin{document}

\title{\LARGE\textbf{\titleinfo}}
\author{\large\textsc{\authorinfo}}

\address{1 School of Mathematical Sciences, Chongqing Normal University, Chongqing 401331, China}
\email{whqian86@163.com, wuwm@amss.ac.cn}

\address{2 School of Mathematical Sciences, Qufu Normal University, Shandong 273165, China}
\email{wangliguang0510@163.com}

\address{3 Institute of Mathematics, Academy of Mathematics and Systems Science \\
Chinese Academy of Sciences, Beijing 100190, China}
\address{4 School of Mathematical Sciences, University of Chinese Academy of Sciences,
Beijing 100049, China}
\email{wyuan@math.ac.cn}

\date{}

\begin{abstract}
The Wigner's theorem, which is one of the cornerstones of the mathematical formulation of
quantum mechanics, asserts that every symmetry of quantum system is unitary or anti-unitary.
This classical result was first given by Wigner in 1931. Thereafter it has been proved and
generalized in various ways by many authors. Recently, G. P. Geh\'{e}r extended Wigner's and
Moln\'{a}r's theorems and characterized the transformations on the Grassmann space of all
rank-$n$ projections which preserve the transition probability. The aim of this paper is to provide a
new approach to describe the general form of the transition probability preserving
(not necessarily bijective) maps between Grassmann spaces. As a byproduct, we are able to generalize
the results of Moln\'{a}r and G. P. Geh\'{e}r.
\end{abstract}

\subjclass[2010]{47B49, 54E40}
\thanks{The fourth author is supported by the Youth Innovation Promotion Association, CAS.}
\thanks{$^\dag$ Corresponding author.}
\maketitle

\section{Introduction and Statment of the Main Result}

The celebrated Wigner’s Symmetry Representation Theorem guarantees that every symmetry's actions
on states is induced by a unitary or an anti-unitary operator. If we identify the set of pure states
with the Grassmann space $\PPP_1$ of all rank-one orthogonal projections in $\BBB(\HHH)$, where
$\BBB(\HHH)$ denotes the algebra of all bounded linear opeartor on a Hilbert space $\HHH$, then
Wigner's theorem can be formulated mathematically as follows:
\begin{theorem*}[E. P. Wigner \cite{W}]
Let $\varphi: \PPP_1 \rightarrow \PPP_1$ be a surjective map. If $\varphi$ preserves the
transition probability, i.e., $Tr(PQ)= Tr(\varphi(P)\varphi(Q))$ for
every $P, Q \in \PPP_1$, where $Tr$ is the canonical trace on $\BBB(\HHH)$,
then there is an either unitary or anti-unitary
$U \in \BBB(\HHH)$ such that $\varphi(P) = UPU^*$.
\end{theorem*}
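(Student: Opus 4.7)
The plan is to reduce the problem to a question about unit vectors in $\HHH$, normalize $\varphi$ by composing with a suitable unitary so that it fixes a chosen orthonormal basis pointwise, and then use transition-probability constraints against carefully chosen test projections to recover the action of $\varphi$ on arbitrary rank-one projections.

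First I would fix an orthonormal basis $\{e_i\}_{i \in I}$ of $\HHH$, put $P_i := e_i e_i^* \in \PPP_1$, and observe that $Tr(P_i P_j) = \delta_{ij}$ forces $\{\varphi(P_i)\}_{i \in I}$ to be a mutually orthogonal family of rank-one projections, hence equal to $f_i f_i^*$ for some orthonormal system $\{f_i\}$. Surjectivity of $\varphi$ promotes $\{f_i\}$ to a complete basis of $\HHH$: otherwise there would be a unit vector $g$ orthogonal to every $f_i$, and a preimage of $g g^*$ under $\varphi$ would have transition probability zero with every $P_i$, which is impossible. Letting $U_0$ be the unitary sending $e_i$ to $f_i$, the map $P \mapsto U_0^* \varphi(P) U_0$ is again transition-probability preserving and surjective, so I may assume without loss of generality that $\varphi(P_i) = P_i$ for all $i$.

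For an arbitrary unit vector $x = \sum_i c_i e_i$, writing $\varphi(x x^*) = y y^*$ with $y = \sum_i d_i e_i$, the identity $|c_i|^2 = Tr(x x^* P_i) = Tr(y y^* P_i) = |d_i|^2$ forces $|c_i| = |d_i|$, so the moduli of the coordinates are preserved. To control the phases, I would determine $\varphi$ on projections $Q_{ij}(\lambda)$ onto $(e_i + \lambda e_j)/\sqrt{2}$ for unimodular $\lambda$. The modulus argument applied to $Q_{ij}(\lambda)$ shows $\varphi(Q_{ij}(\lambda)) = Q_{ij}(\mu)$ for some unimodular $\mu = \mu_{ij}(\lambda)$, and then probing $\varphi(x x^*)$ against these test projections yields that the phase $\overline{d_i} d_j$ must equal either $\overline{c_i} c_j$ or its complex conjugate $c_i \overline{c_j}$.

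The heart of the argument, and the main obstacle, is the global coherence of this dichotomy: a priori the choice between the two alternatives could depend on the pair $(i,j)$ or on the vector $x$. I would resolve this by testing on a unit vector supported on three basis elements: the three pairwise phase relations are coupled by a non-trivial triangle condition and admit only one of the two simultaneous alternatives. Propagating through the connected structure of the set of unit vectors, a single uniform alternative governs all $x$ and all pairs $(i,j)$ simultaneously. The resulting correspondence $x \mapsto y$, well-defined on $\PPP_1$, is then realized (after a canonical choice of representative vectors) either as a $\C$-linear or as a conjugate-linear isometry on $\HHH$; combined with the modulus-preserving property this extends to a unitary or anti-unitary operator $V \in \BBB(\HHH)$. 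Finally $U := U_0 V$ implements the original $\varphi$, completing the proof.
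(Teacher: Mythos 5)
You should know at the outset that the paper never proves this statement: it is quoted as Wigner's classical theorem and cited to \cite{W}; the closest the paper comes to a proof is the machinery of \cref{sec2}, which in the surjective rank-one case would proceed by extending $\varphi$ orthoadditively to all projections (\cref{ext_Phi_orth}), invoking the Bunce--Wright solution of the Mackey--Gleason problem to get a bounded linear extension (\cref{ext_C_hom}), and applying St\o rmer's Jordan-homomorphism decomposition (\cref{thm_lemma}) to land on a *-automorphism or *-anti-automorphism of $\BBB(\HHH)$, hence a unitary or anti-unitary implementation. Your proposal takes the genuinely different, classical elementary route (essentially Wigner--Bargmann): coordinates with respect to a fixed orthonormal basis, normalization by a unitary so that the basis projections are fixed, phase analysis against the two-dimensional test projections $Q_{ij}(\lambda)$, and the three-coordinate consistency argument to globalize the linear-versus-conjugate-linear dichotomy. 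This is correct in outline, and the trade-off is clear: your argument is self-contained and works even for $\dim\HHH=2$ (which the paper's main theorem excludes, since there $c=\tau(I)/2$), but it is wedded to rank-one projections and uses surjectivity already in the first step (completeness of the image basis $\{f_i\}$); the paper's route needs heavy external inputs but handles higher-rank Grassmannians, non-surjective maps, and general semifinite factors. Two spots in your sketch need more care when written out: to pin down $\mu_{ij}(\lambda)$ you must first compare $Q_{ij}(\lambda)$ with $Q_{ij}(\lambda')$ for varying unimodular $\lambda,\lambda'$ and then absorb the residual phases $\mu_{ij}(1)$ by composing with a further diagonal unitary; and the ``propagation through the connected structure'' is exactly the crux of the theorem, so the triangle computation on a vector supported on three basis elements must be carried out explicitly rather than asserted, since no continuity of $\varphi$ is available to do the propagation for you.
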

This result is first proved by Wigner in \cite{W}. Thereafter, it has been proved and generalized
in various ways by many authors such as \cite{BJM, BMS, MG, CW, GS, G2017, M1, M2, M4, M5,M6, S0, S, ES2},
to mention but a few. The aim of the present paper is to
extend Wigner's theorem and characterize the transition probability preserving maps
between more Grassmann spaces of projections in semifinite factors.
In order to achieve the goal, we first reformulate Wigner's theorem in the language of von Neumann algebras.

Recall that a (concrete) von Neumann algebra is a *-subalgera of $\BBB(\HHH)$ that is closed in the
weak-operator topology. If a von Neumann algebra $\AAA$ endowed with a normal faithful tracial
weight $\tau$, then $\AAA$ is called semifinite. A factor is a von Neumann algebra
whose center consists of multiples of the identity. A factor is said to be of type I if
it has a minimal projection. And a type II factor is a semifinite factor which contains no minimal projections.
We direct the reader to \cite{BB, KRII, Tak} for a general reference on the theory of von Neumann algebras.

Let $\AAA$ be a semifinite factor with a normal faithful tracial weight $\tau$ (it is
unique up to scalar multiplication) and $\PPP$ be the set of all orthogonal projections in $\AAA$.
Denote by $\PPP_c$ the Grassmann space of
all projections in $\AAA$ with the trace value $c$, i.e., $\PPP_c =
\{P \in \PPP: \tau(P) = c\}$, $c \in [0, \tau(I)]$. A map $\varphi: \PPP_c \rightarrow \PPP_c$
is transition probability preserving if $\tau(PQ)= \tau(\varphi(P)\varphi(Q))$ for
every $P, Q \in \PPP_c$.

Note that $\BBB(\HHH)$ is a semifinite type I factor. And
Wigner's theorem asserts that if $\AAA \cong \BBB(\HHH)$, then
every surjective transition probability preserving map $\varphi: \PPP_1 \rightarrow \PPP_1$
is induced by either a unitary or an anti-unitary.

Recently, G. P. Geh\'{e}r generalized Wigner's and Moln\'{a}r's theorem \cite{W,M2,M6} and prsented the
following characterisation of transition probability preserving maps on Grassmann
space $\PPP_n$ for $\AAA \cong \BBB(\HHH)$ and $2n \neq \dim\HHH$ in \cite{G2017}:
every (not necessarily bijective) transition probability preserving map $\varphi: \PPP_n \rightarrow \PPP_n$
is induced by a linear or conjugate-linear isomegry $U: \HHH \rightarrow \HHH$, i.e.,
$\varphi(P) = UPU^*$ for all $P \in \PPP_n$.\footnote{The case
when $2n = \dim \HHH$ is also handled in \cite{G2017}.}

A map $\sigma: \BBB(\HHH_1) \rightarrow \BBB(\HHH_2)$ is said to be implemenyed by a unitary (resp. by an
anti-unitary) $U: \HHH_1 \rightarrow \HHH_2$ if $\sigma(A) = UAU^*$ (resp. $\sigma(A) = UA^*U^*)$) for all
$A \in \BBB(\HHH_1)$. Since every *-isomorphism (resp. *-anti-isomorphism) $\sigma$ form
$\BBB(\HHH_1)$ to $\BBB(\HHH_2)$ is implemented by a unitary
(resp. by an anti-unitary) by \cite[Theorem 4.27]{AS}, the result in \cite{G2017} can be restated as:
\begin{theorem*}[G. P. Geh\'{e}r \cite{G2017}]
    Let $\AAA = \BBB(\HHH)$ where $\HHH$ is a Hilbert space with $\dim \HHH \geq 2$. Assume that
    $2n \neq \dim \HHH$. For every transition probability preserving map $\varphi: \PPP_n \rightarrow
\PPP_n$, there exists an orthogonal projection
$E \in \AAA = \BBB(\HHH)$ and a *-isomorphism or a *-anti-isomorphism
$\sigma: \AAA \rightarrow E\AAA E$ such that $\varphi(P) = \sigma(P)$ for all $P\in\PPP_n$.
\end{theorem*}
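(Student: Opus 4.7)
The plan is to reduce the rank-$n$ problem to the rank-one case, invoke the classical Wigner--Moln\'{a}r theorem to obtain a linear or conjugate-linear isometry $U$, and then show that $\varphi$ coincides with $\sigma(\,\cdot\,) = U(\,\cdot\,)U^*$ (or its anti-linear variant) on $\PPP_n$, with $E = UU^*$. The transition-probability structure on $\PPP_n$ should be rigid enough to make this descent-and-lift strategy work, provided $2n \neq \dim\HHH$.

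First I would extract structural consequences of the preservation hypothesis. The value $\tau(PQ) \in [0,n]$ encodes the principal angles between the ranges of $P$ and $Q$: $\tau(PQ) = n$ forces $P = Q$, so $\varphi$ is injective; $\tau(PQ) = 0$ means $P \perp Q$, so orthogonality is preserved. More generally, the full sequence of principal-angle data of $(P,Q)$ can be detected by probing with third projections and reading off the transition-probability values, so $\varphi$ preserves incidence relations such as ``$\dim(\operatorname{ran} P \cap \operatorname{ran} Q) = k$''. In particular the rank-$n$ projections in the image are determined by their transition-probability profile.

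Next I would descend to rank one. For a rank-one projection $p$, introduce the pencil $\FF_p = \{P \in \PPP_n : p \leq P\}$, which is intrinsically detectable: $P \in \FF_p$ iff $P$ sits inside a family of rank-$n$ projections whose pairwise transition-probability values match the pattern forced by a common rank-one subprojection. Under $\varphi$, the pencil $\FF_p$ should go into a pencil $\FF_{p'}$ for a uniquely determined rank-one $p' =: \varphi_1(p)$, obtained as the common rank-one subprojection of the image family. One then checks $\tau(\varphi_1(p)\varphi_1(q)) = \tau(pq)$ by extending $p, q$ to $P, Q \in \PPP_n$ that share a rank-$(n-1)$ subprojection perpendicular to both, so that $\tau(PQ) = (n-1) + \tau(pq)$ transfers cleanly under $\varphi$.

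Finally, applying the Wigner--Moln\'{a}r theorem to $\varphi_1 : \PPP_1 \to \PPP_1$ yields a linear or conjugate-linear isometry $U$ with $\varphi_1(p) = UpU^*$ for every $p \in \PPP_1$. Setting $E = UU^*$ and defining $\sigma(A) = UAU^*$ (respectively $\sigma(A) = UA^*U^*$) gives the advertised $*$-isomorphism or $*$-anti-isomorphism onto $E\AAA E$. That $\varphi(P) = \sigma(P)$ for $P = \sum_{i=1}^n p_i \in \PPP_n$ then follows because $\sigma(P) = \sum_i \varphi_1(p_i)$ is a rank-$n$ projection with the same transition probabilities as $\varphi(P)$ against every test projection in $\PPP_n$, and such projections are determined by this profile. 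The main obstacle is the descent step: showing $\varphi_1$ is well defined and transition-probability preserving without using surjectivity of $\varphi$. This is precisely where the hypothesis $2n \neq \dim\HHH$ enters, since when $2n = \dim\HHH$ the involution $P \mapsto I - P$ maps $\PPP_n$ into itself and satisfies $\tau((I-P)(I-Q)) = \tau(I) - \tau(P) - \tau(Q) + \tau(PQ) = \tau(PQ)$, creating a genuine complementation ambiguity in canonically pulling a rank-one projection out of $\varphi(\FF_p)$; the hypothesis is designed to rule out exactly this ambiguity.
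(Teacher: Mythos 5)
Your route---descend to rank one via pencils, apply the classical Wigner--Moln\'{a}r theorem, then lift back---is not the one the paper takes. The paper treats this statement as a known result and reproves it as the type~I special case of \cref{main_result}: the proof there extends $\varphi$ to an orthoadditive map on the whole projection lattice (\cref{ext_Phi_orth}), linearizes it via the Bunce--Wright solution of the Mackey--Gleason problem (\cref{ext_C_hom}), and applies St\o rmer's decomposition of Jordan homomorphisms into a *-homomorphism plus a *-anti-homomorphism (\cref{thm_lemma}), with the type~I hypothesis forcing one summand to vanish. No descent to rank one occurs anywhere in the paper.

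A different route would be acceptable, but yours has a genuine gap at exactly the step you flag as ``the main obstacle,'' and you do not close it. The well-definedness of $\varphi_1$ rests on the claim that the pencil $\FF_p=\{P\in\PPP_n: p\le P\}$ is intrinsically detectable because its members exhibit a pairwise transition-probability ``pattern forced by a common rank-one subprojection,'' and that $\varphi(\FF_p)$ consequently admits a common rank-one subprojection $p'$. Pairwise data cannot certify this: in $\C^5$ with $n=2$, three planes can pairwise intersect in lines (so every pair realizes $\tau(P_iP_j)=1+\cos^2\theta$ with smallest principal angle zero, exactly the profile of a pencil) while having trivial common intersection. Thus ``same pairwise profile as a pencil'' does not imply ``is a pencil,'' and nothing in your sketch actually produces $p'$. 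The supporting claim that ``the full sequence of principal-angle data can be detected by probing with third projections'' is likewise asserted, not proved, and is far from free: the paper needs all of \cref{aux} (a delicate induction on trace values) merely to show that $\varphi$ preserves commutativity and the relation $Q\le P_1\vee P_2$ for \emph{commuting} $P_1,P_2$; moreover, without surjectivity you can only probe images against other images, which weakens what ``reading off'' can mean on the range side. Until you supply a characterization of pencils that is provably invariant under non-surjective transition-probability preservers---which is where the real content of Geh\'{e}r's argument lies---the descent, and hence the whole proof, is incomplete. The peripheral steps are fine: injectivity from $\tau(PQ)=n$ iff $P=Q$, the role of $2n\ne\dim\HHH$ in excluding the complementation $P\mapsto I-P$, and the final identification $\varphi(P)=\sigma(P)$, which indeed follows from $\varphi_1(p_i)\le\varphi(P)$ and $\tau(\sigma(P)\varphi(P))=n$ once the descent is in place.
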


At this stage one is tempted to conjecture that the same result holds for every transition probability preserving map between the Grassmann space of projections in
any semifinite factor with a fixed trace value. However, as showed by the example below,
this naive generalization of the Wigner's theorem is doomed to fail.

\begin{example}
    Let $\AAA= \mathcal{R} \otimes M_2(\C)$ where $\mathcal{R}$ is the hyperfinite II$_1$ factor.
    Let $\mathcal{R}_0 \subsetneq \mathcal{R}$ be a hyperfinite subfactor of $R$.
    Since $\AAA$ is also the hyperfinite II$_1$ factor, there exist a *-isomorphism $\varphi_1$ and a
    *-anti-isomorphism $\varphi_2$ from $\AAA$ to $\mathcal{R}_0$. Then the map
    \begin{align*}
        \varphi: P \in \PPP_{c} \mapsto
        \begin{pmatrix}
            \varphi_1(P) & \\
            & \varphi_2(P) \\
        \end{pmatrix} \in \PPP_c
    \end{align*}
    preserves the transition probability.
\end{example}

Keeping in mind the example above, we formulate the main result of the paper as below.

\begin{theorem} \label{main_result}
Let $\AAA$ be a semifinite factor with a faithful normal tracial weight $\tau$ and
    $c \in (0, \tau(I)) \setminus \{\tau(I)/2\}$.
    If the map $\varphi:\PPP_c\rightarrow \PPP_c$ satisfies
\begin{align*}
    \tau(\varphi(P)\varphi(Q))=\tau(PQ), \quad \forall P,Q\in\PPP_c,
\end{align*}
then there exist two orthogonal projections $E_1$ and $E_2$ in $\AAA$ and
a $\sigma$-weak continuous *-homomorphism $\sigma_1: \AAA \rightarrow E_1 \AAA E_1$ and
a $\sigma$-weak continuous *-anti-homomorphism $\sigma_2: \AAA \rightarrow E_2 \AAA E_2$ such that
\begin{align*}
    \tau = \tau \circ (\sigma_1 + \sigma_2),
\end{align*}
    and $\varphi(P) = \sigma_1(P) + \sigma_2(P)$ for every $P \in \PPP_c$.
Furthermore,
\begin{enumerate}
    \item $\sigma_i(I) = E_i$, $i = 1,2$. And $E_1 +E_2 = I$ if $\AAA$ is a finite factor.
    \item If $\AAA$ is type I, then $\sigma_1$ or $\sigma_2$ is the zero map.
\end{enumerate}
\end{theorem}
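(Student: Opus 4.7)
The plan is to promote the map $\varphi$, initially defined only on the slice $\PPP_c$, to a $\sigma$-weakly continuous Jordan *-homomorphism $\Phi: \AAA \to \AAA$, and then invoke the classical decomposition of such a map into a *-homomorphism plus a *-anti-homomorphism.

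First I would record the basic consequences of the hypothesis. Setting $P = Q$ yields $\tau(\varphi(P)) = c$, confirming that $\varphi$ lands in $\PPP_c$, and the identity
\[
\tau\bigl((\varphi(P)-\varphi(Q))^2\bigr) = 2c - 2\tau(PQ) = \tau\bigl((P-Q)^2\bigr)
\]
together with faithfulness of $\tau$ shows $\varphi$ is injective. Because $\tau(PQ) = 0$ is equivalent to $PQ = 0$ for projections, $\varphi$ preserves orthogonality in both directions. Replacing $\varphi$ with $P \mapsto I - \varphi(I-P)$ if necessary, I may assume $c < \tau(I)/2$, so every $P \in \PPP_c$ has an orthogonal partner in $\PPP_c$.

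The technical heart is an additive extension. For a projection $R \in \AAA$ with $\tau(R) = nc$, I would choose an orthogonal decomposition $R = P_1 + \cdots + P_n$ with each $P_i \in \PPP_c$ and tentatively set $\Phi(R) := \sum_i \varphi(P_i)$; this is a projection of trace $nc$ by orthogonality preservation. For well-definedness, given a second decomposition $R = \sum_j Q_j$, the identity
\[
\tau\Bigl(\bigl(\sum_i\varphi(P_i)\bigr)\varphi(S)\Bigr) = \tau(RS) = \tau\Bigl(\bigl(\sum_j\varphi(Q_j)\bigr)\varphi(S)\Bigr)
\]
valid for every $S \in \PPP_c$ forces the difference of the two sums to be orthogonal to every element of $\varphi(\PPP_c)$ under the trace pairing, and hence to vanish inside the von Neumann algebra generated by $\varphi(\PPP_c)$. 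Spectral calculus and $\sigma$-weak continuity then extend $\Phi$ to a positive linear map $\AAA \to \AAA$. Since $\Phi$ maps projections to projections, and the polarized identity $\tau(\Phi(A)\Phi(B)) = \tau(AB)$ persists for self-adjoint $A, B$, the map $\Phi$ is a Jordan *-homomorphism.

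Finally, a $\sigma$-weakly continuous Jordan *-homomorphism $\Phi: \AAA \to \AAA$ splits by the classical Kadison structure theorem as $\Phi = \sigma_1 + \sigma_2$, where $\sigma_1$ is a *-homomorphism, $\sigma_2$ is a *-anti-homomorphism, and the images sit in orthogonal corners $E_i \AAA E_i$ with $E_i = \sigma_i(I)$. This yields the main decomposition of Theorem \ref{main_result}; the trace identity $\tau = \tau \circ (\sigma_1 + \sigma_2)$ then propagates from $\tau(\varphi(P)) = \tau(P)$ on $\PPP_c$ by linearity and $\sigma$-weak continuity, and when $\AAA$ is finite, $\tau(E_1 + E_2) = \tau(I)$ forces $E_1 + E_2 = I$. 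For claim (ii), when $\AAA = \BBB(\HHH)$ is type I, factoriality together with the matching given by Geh\'er's theorem forces one of $\sigma_1, \sigma_2$ to vanish. I expect the main obstacle to be the well-definedness argument for $\Phi$: since $\varphi(\PPP_c)$ need not exhaust $\PPP_c$, the trace-pairing separation must be executed inside the range algebra of $\varphi$, and reconciling that with the desired decomposition on all of $\AAA$ likely requires a more delicate orthoadditivity analysis or a Dye-style lattice extension rather than the clean route sketched above.
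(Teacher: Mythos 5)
There is a genuine gap, and it sits exactly where the paper spends most of its effort. Your orthoadditive extension defines $\Phi$ only on projections $R$ with $\tau(R)=nc$, $n\in\N$, i.e.\ on $\PPP_{S}$ for $S=c\N\cap[0,\tau(I)]$. That set does not additively generate the full projection lattice: a projection of trace $c/2$ is not a sum of orthogonal projections of trace $c$, and if $\tau(I)\notin c\N$ (the generic situation in a type II factor) then even $I$ lies outside the domain. Consequently the Bunce--Wright/Mackey--Gleason linearization --- which is what ``spectral calculus and $\sigma$-weak continuity then extend $\Phi$ to a positive linear map'' would have to mean --- is not applicable: that theorem needs a finitely orthoadditive map defined on \emph{all} projections of $\AAA$. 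You have correctly sensed that something is missing, but you locate the obstacle in the well-definedness of $\Phi(R)$ with respect to the chosen decomposition and in the non-surjectivity of $\varphi$; both of those are actually unproblematic (the trace-pairing argument you sketch does show $\sum_i\varphi(P_i)=\sum_j\varphi(Q_j)$, exactly as in the paper's Lemma~\ref{ext_Phi_orth}). The real missing idea is how to push $\varphi$ \emph{downward} to smaller trace values.

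The paper does this by first proving that $\varphi$ preserves commutativity and the relation $Q\le P_1\vee P_2$ for commuting $P_1,P_2\in\PPP_c$ (\cref{P_Q_comm_general_case}, \cref{include_two_case}, themselves established by a delicate induction on $\tau(P_1P_2)$), and then using the algebraic identity $Q=\frac{1}{m}[P_1+\cdots+P_m-(m-1)P_0]$ to transport $\varphi$ from $\PPP_c$ to $\PPP_{c/m}$ (type I) or iteratively to $\PPP_{c/2^n}$ (type II); see \cref{decom_exist}--\cref{def_half_eq}. Only after this enlargement does one obtain a transition-probability-preserving map on a set $\PPP_S$ whose elements additively generate $\PPP$, at which point the route you envisage (orthoadditive extension, Bunce--Wright, Jordan homomorphism, St\o rmer/Kadison splitting $\Phi=\sigma_1+\sigma_2$, the complement trick for $c>\tau(I)/2$) goes through and is indeed the paper's \cref{thm_lemma}. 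Two smaller points: preservation of the trace pairing does not by itself make the linear extension a Jordan homomorphism --- the paper derives $\Phi(H^2)=\Phi(H)^2$ from orthoadditivity on finite real combinations of orthogonal projections and then uses boundedness; and in the type I case one should not appeal to Geh\'er's theorem (which the result is meant to generalize) but simply observe that $\sigma_1(P)+\sigma_2(P)$ has rank one whenever $P$ does, forcing one summand to vanish.
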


In order to prove this result, we first characterize the transition probability preserving maps
between subsets of $\PPP$ whose linear span contains $\PPP$ (see \cref{thm_lemma} for details)
in \cref{sec2}. Next, we show that the maps considered in \cref{main_result} preserve the relation
of commutativity and inclusion between the elements of $\PPP_c$ (see \cref{P_Q_comm_general_case} and
\cref{include_two_case}) in \cref{aux}. After these preparations, we complete the proof of \cref{main_result}
in \cref{main}.

\section{The transformations from $\PPP_{S}$ to $\PPP_{S}$ preserving transition probability}\label{sec2}

In this section, we assume that $\AAA$ is a semifinite factor with
a fixed normal faithful tracial weight $\tau$. Let $\PPP$ be the set of all projections in $\AAA$. If $S$ is a subset of $[0, \infty)$, we use
$\PPP_S$ to denote the subset of $\PPP$:
\begin{align*}
    \PPP_{S} = \{P \in \PPP: \tau(P) \in S\}.
\end{align*}
For $S = \{c\}$, where $c \in [0, \infty)$, we simplify notation and write $\PPP_{c}$ for
$\PPP_{\{c\}}$.

Recall that a map $\varphi: \PPP_{S} \rightarrow \PPP_{S}$ preserves the transition probability if
\begin{align*}
    \tau(\varphi(P) \varphi(Q)) = \tau(PQ), \quad \forall P, Q \in \PPP_{S}.
\end{align*}
Note that $P\perp Q$ if and only if $PQ=QP=0$ and the following result is immediate.

\begin{lemma}\label{pre_orth}
    Let $\varphi: \PPP_{S} \rightarrow \PPP_{S}$ be a map
    preserving the transition probability. Then $\varphi$ preserves the orthogonality
    in both directions, i.e., $P \bot Q$ if and only if
    $\varphi(P) \bot \varphi(Q)$.
\end{lemma}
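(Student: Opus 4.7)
The plan is to reduce the orthogonality of two projections to the vanishing of their trace product, which by hypothesis is preserved under $\varphi$. The key ingredient is the following observation, valid for any projections $P, Q \in \AAA$ with $\tau(P), \tau(Q) < \infty$ (so that all trace expressions are meaningful):
\[
P \perp Q \iff \tau(PQ) = 0.
\]
The forward implication is trivial. For the converse, I would use the tracial property of $\tau$ to rewrite
\[
\tau(PQ) = \tau(PQ \cdot Q) = \tau(QPQ),
\]
and then observe that $QPQ = (PQ)^{*}(PQ) \geq 0$. Faithfulness of $\tau$ therefore forces $QPQ = 0$, whence $PQ = 0$; taking adjoints then gives $QP = (PQ)^{*} = 0$, so $P \perp Q$.

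With this equivalence in hand, the lemma is immediate: for all $P, Q \in \PPP_{S}$,
\[
P \perp Q \iff \tau(PQ) = 0 \iff \tau(\varphi(P)\varphi(Q)) = 0 \iff \varphi(P) \perp \varphi(Q),
\]
where the middle step uses the transition probability preserving hypothesis. There is essentially no obstacle here; the only conceptual input is the faithfulness and tracial property of $\tau$, and one should only take minimal care that $PQ$ lies in the domain where the trace is finite, which is automatic because $QPQ \leq Q$ (using $P \leq I$) together with $\tau(Q) < \infty$ guarantees $\tau(QPQ) < \infty$.
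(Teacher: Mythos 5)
Your proof is correct and follows essentially the same route the paper intends: the paper simply remarks that $P\perp Q$ iff $PQ=QP=0$ and calls the lemma immediate, the implicit argument being exactly your observation that $\tau(PQ)=\tau(QPQ)$ with $QPQ=(PQ)^*(PQ)\geq 0$, so faithfulness of $\tau$ gives $\tau(PQ)=0\iff PQ=0$. Your version just spells out the details (including the finiteness of the relevant traces) that the paper leaves to the reader.
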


Let $E_1$ and $E_2$ be two orthogonal projections in $\AAA$. If there exist a *-homomorphism $\sigma_1:
\AAA \rightarrow E_1 \AAA E_1$ and a *-anti-homomorphism $\sigma_2: \AAA \rightarrow E_2 \AAA E_2$ such
that
\begin{align*}
    \tau = \tau \circ (\sigma_1 + \sigma_2),
\end{align*}
then it is not hard to check that the map,
\begin{align*}
    \varphi: P \in \PPP_{S} \rightarrow \sigma_1(P) + \sigma_2(P) \in \PPP_{S},
\end{align*}
preserves the transition probability. Conversely, we show that the transition probability preserving map
$\varphi: \PPP_{S} \rightarrow \PPP_S$ always arises from a sum of a *-homomorphism and a *-anti-homomorphism of
$\AAA$, where $S$ is a subset of $[0, \infty)$ such that
every projection $P \in \PPP$ is a sum of a family of mutaually orthogonal
projections in $\PPP_S$. The case when $S = [0, \infty)$ and $\varphi$ is bijective
is handled by L. Moln\'{a}r in \cite{M1}. By modifying the argument used in \cite{M1},
we can prove \cref{thm_lemma} below.

Throughout the rest of this section, we assume that $\PPP_S$ is a subset of $\PPP$ such that
every projection $P \in \PPP$ is a sum of a family of mutaually orthogonal projections in $\PPP_S$.

\begin{lemma}\label{ext_Phi_orth}
    If $\varphi$ is a transformation from $\PPP_{S}$ to $\PPP_{S}$ preserving the
    transition probablity, then $\varphi$ can be extended to a transformation $\Phi: \PPP \rightarrow \PPP$
    such that
    \begin{align*}
        \Phi(\sum_{\alpha} E_\alpha) = \sum_{\alpha} \Phi(E_\alpha)
   \end{align*}
    whenever $\{E_\alpha\}$ is an orthogonal family of projections in $\PPP$.
    Furthermore, if $\varphi$ is surjective, then $\Phi$ is also surjective.
\end{lemma}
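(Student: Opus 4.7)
The plan is to define $\Phi$ directly from orthogonal decompositions and then show that the definition is independent of the choice of decomposition; everything else follows quickly. Given $P\in\PPP$, the standing hypothesis on $\PPP_S$ provides a decomposition $P=\sum_\alpha E_\alpha$ with $E_\alpha\in\PPP_S$ mutually orthogonal. By \cref{pre_orth} the family $\{\varphi(E_\alpha)\}$ is also mutually orthogonal, so its $\sigma$-weak sum is a projection in $\PPP$, and I set $\Phi(P):=\sum_\alpha \varphi(E_\alpha)$.

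The main obstacle, and essentially the whole content of the lemma, is well-definedness: two orthogonal decompositions $P=\sum_\alpha E_\alpha=\sum_\beta F_\beta$ in $\PPP_S$ must produce the same projection. Setting $Q:=\sum_\beta\varphi(F_\beta)$, my plan is to show $\varphi(E_\alpha)\leq Q$ for every $\alpha$ by a single trace computation using the transition probability hypothesis:
\begin{align*}
\tau(\varphi(E_\alpha)\,Q)=\sum_\beta \tau(\varphi(E_\alpha)\varphi(F_\beta))=\sum_\beta\tau(E_\alpha F_\beta)=\tau(E_\alpha P)=\tau(E_\alpha)=\tau(\varphi(E_\alpha)),
\end{align*}
where interchanging $\tau$ with the $\sigma$-weak sum is justified by normality of $\tau$ together with $\tau(E_\alpha)<\infty$ (since $S\subseteq[0,\infty)$). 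It follows that $\tau((I-Q)\varphi(E_\alpha)(I-Q))=0$, and faithfulness of $\tau$ gives $\varphi(E_\alpha)\leq Q$. Summing the mutually orthogonal $\varphi(E_\alpha)$ yields $\sum_\alpha\varphi(E_\alpha)\leq Q$, and the reverse inequality follows by symmetry. Taking the trivial decomposition $P=P$ when $P\in\PPP_S$ shows simultaneously that $\Phi$ extends $\varphi$.

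Orthogonal additivity is now immediate: for an orthogonal family $\{P_\gamma\}\subset\PPP$ with decompositions $P_\gamma=\sum_\alpha E_{\gamma,\alpha}$ from $\PPP_S$, the combined family $\{E_{\gamma,\alpha}\}_{\gamma,\alpha}$ is an orthogonal decomposition of $\sum_\gamma P_\gamma$, so well-definedness yields
\begin{align*}
\Phi\Bigl(\sum_\gamma P_\gamma\Bigr)=\sum_{\gamma,\alpha}\varphi(E_{\gamma,\alpha})=\sum_\gamma\Phi(P_\gamma).
\end{align*}
For the surjectivity statement, given $P\in\PPP$ I write $P=\sum_\alpha E_\alpha$ with $E_\alpha\in\PPP_S$ and pick preimages $F_\alpha\in\PPP_S$ with $\varphi(F_\alpha)=E_\alpha$; since $\varphi(F_\alpha)\perp\varphi(F_\beta)$ for $\alpha\neq\beta$, the backward direction of \cref{pre_orth} forces $F_\alpha\perp F_\beta$, so $F:=\sum_\alpha F_\alpha\in\PPP$ satisfies $\Phi(F)=P$.
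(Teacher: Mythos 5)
Your proposal is correct and follows essentially the same route as the paper: define $\Phi$ via an orthogonal decomposition into $\PPP_S$, prove well-definedness by a trace computation showing each $\varphi(E_\alpha)$ is dominated by the sum coming from the other decomposition (the paper tests against an arbitrary $E\in\PPP_S$ where you test directly against $E_\alpha$, but the mechanism --- trace equality plus faithfulness forces the subprojection relation --- is identical), and then deduce orthoadditivity and surjectivity by refining decompositions. Your treatment of the normality justification and of the surjectivity step is slightly more explicit than the paper's, but there is no substantive difference.
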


\begin{proof}
    Let $P \in \PPP$. There exists a family of pairwise orthogonal projections $\{P_{\alpha}\}_{\alpha}$ in
    $\PPP_{S}$ such that $P = \sum_{\alpha}P_{\alpha}$. We define
    \begin{align*}
        \Phi(P) = \sum_\alpha \varphi(P_\alpha).
    \end{align*}
    By \cref{pre_orth}, $\Phi(P) \in \PPP$. If $\{Q_{\beta}\} \subset \PPP_{S}$ also satisfies
    $P = \sum_{\beta}Q_\beta$, then for every $E \in \PPP_{S}$, we have
    \begin{align*}
        \tau([\sum_{\alpha} \varphi(P_\alpha)]\varphi(E)) = \tau([\sum_{\alpha}P_\alpha]E)
        =\tau([\sum_{\beta}Q_\beta]E) = \tau([\sum_{\beta} \varphi(Q_\beta)]\varphi(E)).
    \end{align*}
    Then it is clear that
    \begin{align*}
        \varphi(P_\alpha) \leq \sum_{\beta}\varphi(Q_\beta),
        \quad \varphi(Q_\beta) \leq \sum_{\alpha}\varphi(P_{\alpha}),
    \end{align*}
    and $\sum_{\alpha} \varphi(P_\alpha)
    = \sum_{\beta} \varphi(Q_\beta)$. Thus $\Phi$ is well-defined.

    Now assume that $\{E_\alpha\}$ is an orthogonal family of projections in $\PPP$.
    For each $E_\alpha$, there exists a family of pairwise orthogonal projections $\{P_{\alpha}^{\beta}\}_{\beta}
    \subset \PPP_{S}$ such that $E_\alpha = \sum_\beta P_{\alpha}^{\beta}$.
    By the definition of $\Phi$ and note that $P_{\alpha_1}^{\beta_1} \bot P_{\alpha_2}^{\beta_2}$ unless
    $\alpha_1 = \alpha_1$ and $\beta_1 = \beta_2$, we have
    \begin{align*}
        \Phi(\sum_{\alpha} E_\alpha) = \Phi(\sum_{\alpha, \beta}P_{\alpha}^{\beta}) =
        \sum_{\alpha, \beta} \varphi(P_{\alpha}^{\beta}) = \sum_{\alpha} \Phi(E_\alpha).
    \end{align*}
    Finally, since any projection in $\AAA$ is the sums of projections in $\PPP_{S}$,
    the surjectivity of $\varphi$ implies the surjectivity of $\Phi$.
\end{proof}

\begin{lemma}\label{ext_C_hom}
    Let $\Phi$ be the map defined in \cref{ext_Phi_orth}.
    Then $\Phi$ can be extended to a C$^*$-homomorphism (\cite{ES}) from $\AAA$
    into $\AAA$, i.e., a linear self-adjoint map such that $\Phi(H^2) = \Phi(H)^2$ for every
    self-adjoint operator $H \in \AAA$.
\end{lemma}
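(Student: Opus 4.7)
The plan is to extend $\Phi$ first to real linear combinations of orthogonal projections, then by norm continuity to the self-adjoint part of $\AAA$, and finally to $\AAA$ by complex linearity. For a finite orthogonal family $P_1,\dots,P_n\in\PPP$ and reals $\lambda_1,\dots,\lambda_n$, I would set
\begin{align*}
    \Phi\Bigl(\sum_i\lambda_i P_i\Bigr):=\sum_i\lambda_i\Phi(P_i).
\end{align*}
This is well-posed because, if two such representations coincide as operators in $\AAA$, both orthogonal families lie in the abelian von Neumann algebra generated by the common operator (they are bounded Borel functions of it); hence they commute, admit a common refinement $\{P_iQ_j\}$, and the two sums agree by \cref{pre_orth} and \cref{ext_Phi_orth}. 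Since $\{\Phi(P_i)\}$ is an orthogonal family of projections, $\|\sum_i\lambda_i\Phi(P_i)\|\leq\max_i|\lambda_i|=\|\sum_i\lambda_i P_i\|$, so $\Phi$ is norm-contractive on step self-adjoints. These are norm-dense in the self-adjoint part of $\AAA$ (finite Riemann sums of the spectral resolution), so $\Phi$ extends uniquely by continuity, and then to all of $\AAA$ by splitting into real and imaginary parts.

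The main obstacle is additivity across non-commuting self-adjoints. For commuting pairs, a common spectral refinement reduces to the well-definedness step above. In general, I would first establish
\begin{align*}
    \tau(\Phi(R)\Phi(H))=\tau(RH),\qquad R\in\PPP_S,\ H=H^*\in\AAA,
\end{align*}
by verifying it on step operators directly (transition probability preservation plus the definition of $\Phi$) and then passing to the norm limit via the bound $|\tau(\Phi(R)X)|\leq\tau(\Phi(R))\|X\|$. Applied to $H+K$, $H$, and $K$, this yields $\tau(\Phi(R)[\Phi(H+K)-\Phi(H)-\Phi(K)])=0$ for every $R\in\PPP_S$. Faithfulness of $\tau$, combined with the $\sigma$-weak density of $\mathrm{span}\,\Phi(\PPP_S)$ in the corner $\Phi(I)\AAA\Phi(I)$ that contains the above difference, then forces the bracketed expression to vanish. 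This density rests on the standing hypothesis that every projection in $\AAA$ is an orthogonal sum of elements of $\PPP_S$, and is the most delicate point of the whole argument.

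Finally, the Jordan identity $\Phi(H^2)=\Phi(H)^2$ holds on a step self-adjoint $H=\sum_i\lambda_i P_i$ by direct computation: orthogonality of the $P_i$'s gives $H^2=\sum_i\lambda_i^2 P_i$, while orthogonality of the $\Phi(P_i)$'s gives $\Phi(H)^2=\sum_i\lambda_i^2\Phi(P_i)=\Phi(H^2)$. Norm continuity of both sides propagates the identity to all self-adjoint elements, completing the extension to a C$^*$-homomorphism. This route parallels Moln\'ar's bijective argument in \cite{M1}; the new input here is handling possible non-surjectivity of $\varphi$ and the weaker spanning assumption on $\PPP_S$, both absorbed by the $\tau$-pairing argument above.
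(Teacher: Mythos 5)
Your route is genuinely different from the paper's: the paper does not attempt to build the linear extension by hand, but instead observes that $\Phi$ is a bounded orthoadditive (projection-valued) measure on $\PPP$ and invokes the Bunce--Wright solution of the Mackey--Gleason problem \cite[Theorem A]{BW} to obtain the bounded linear extension in one stroke (with a separate elementary argument for the excluded case $\AAA\cong M_2(\C)$, where that theorem fails). The additivity of $\Phi$ across \emph{non-commuting} self-adjoint elements, which you correctly identify as the main obstacle, is precisely the content of that theorem; everything after that (the Jordan identity on step operators and its propagation by continuity) matches the paper. Unfortunately, your substitute for Bunce--Wright has a genuine gap at the step you yourself flag as delicate. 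The claimed $\sigma$-weak density of $\mathrm{span}\,\Phi(\PPP_S)$ in the corner $\Phi(I)\AAA\Phi(I)$ is false in general: the standing hypothesis only says every projection of $\AAA$ is an orthogonal sum of elements of $\PPP_S$, and says nothing about how large $\varphi(\PPP_S)$ is. The paper's own Example 1.1 (with $\mathcal{R}_0\subsetneq\mathcal{R}$) produces a transition probability preserving $\varphi$ whose range generates a proper von Neumann subalgebra of the corner $\Phi(I)\AAA\Phi(I)$, so the trace-pairing against $\Phi(\PPP_S)$ cannot separate points of that corner.

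The argument could conceivably be repaired by replacing the corner with the $\sigma$-weakly closed \emph{linear span} $V$ of $\varphi(\PPP_S)$, which does contain the difference $D=\Phi(H+K)-\Phi(H)-\Phi(K)$; then $\tau(XD)=0$ for all $X$ in the (norm-dense-in-nothing-in-particular, but $\sigma$-weakly dense in $V$) span would give $\tau(D^2)=0$. But to pass from $\mathrm{span}\,\varphi(\PPP_S)$ to its $\sigma$-weak closure you need $X\mapsto\tau(XD)$ to be $\sigma$-weakly continuous, which holds when $\tau$ is finite but fails for the infinite weight on a type I$_\infty$ or II$_\infty$ factor unless you first prove $D$ is trace-class --- and $D$ is built from spectral projections of arbitrary self-adjoint $H,K$, which may have infinite trace. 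So your argument is salvageable in the finite case but not in the generality the lemma requires. A secondary, smaller issue: "extends uniquely by continuity" from step self-adjoints needs uniform continuity, i.e.\ $\norm{\Phi(H)-\Phi(K)}\leq\norm{H-K}$, which already presupposes additivity for non-commuting $H,K$; the contractivity bound only lets you define $\Phi(H)$ via spectral approximants taken inside the abelian algebra $W^*(H)$, and the linearity of the resulting map is exactly what remains to be proved. The clean fix is simply to cite \cite[Theorem A]{BW} as the paper does.
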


\begin{proof}
    We claim that $\Phi$ can be extended to a bounded linear operator from $\AAA$ into $\AAA$.
    If $\AAA$ is not type I$_2$ factor, then \cite[Theoerem A]{BW} implies that the orthoadditive
    transformation $\Phi$ has a unique extension to a bounded linear operator
    from $\AAA$ into $\AAA$. Assume now that $\AAA \cong M_2(\C)$.
    Then $\PPP_{S}$ contains the all rank-one projections.
    Since every operator in $\AAA$ is a linear combination of rank-one projections,
    it is not hard to check that the following gives a bounded extension of $\Phi$:
    \begin{align*}
        \sum_{i=1}^n \lambda_i P_i \in M_2(\C) \mapsto \sum_{i=1}^n \lambda_i\varphi(P_i) \in M_2(\C),
    \end{align*}
    where $P_i$ is a rank-one projection, $i = 1, \ldots, n$.
    Thus claim is proved. To simplify the notation, we use $\Phi$ to denote its extension as well.

    For every family of pairwise orthogonal
    projections $\{P_1, \ldots, P_n\}$ and $\lambda_1, \ldots \lambda_n \in \R$, we have
    \begin{align*}
        \Phi(\sum_{i=1}^{n} \lambda_i P_i)^2 = (\sum_{i=1}^n \lambda_i \Phi(P_i))^2
        = \sum_{i=1}^n \lambda_i^2 \Phi(P_i) = \Phi((\sum_{i=1}^n \lambda_i P_i)^2).
    \end{align*}
    Since $\Phi$ is bounded, we have $\Phi(H)$ is self-adjoint and
    $\Phi(H^2) = \Phi(H)^2$ for every self-adjoint operator $H \in \AAA$.
    Therefore, $\Phi$ is a C$^*$-homomorphism (actually it is not hard to show that $\Phi$ is
    a Jordan *-homomorphism \cite[Proof of Theorem 2]{M0}).
\end{proof}

\begin{theorem}\label{thm_lemma}
    Let $S$ be a subset of $[0, \infty)$ such that every projection $P \in \PPP$ is a sum of a
    family of mutaually orthogonal projections in $\PPP_S$ and
    $\varphi:\PPP_{S} \rightarrow \PPP_{S}$ be a map preserving the transition probability.
    Let $\Phi$ be the C$^*$-homomorphism defined in \cref{ext_C_hom} and
    $\BBB$ be the C$^*$-algebra generated by $\Phi(\PPP)$. Then
    $\BBB$ is a von Neumann subalgebra of $\AAA$. Furthermore,
    there exist two orthogonal central projections $E_1$ and $E_2$ in $\BBB$ with $E_1 +E_2 = I_{\BBB}$
    (here $I_{\BBB}$ is the identity of $\BBB$ and we use $I$ to denote the identity of $\AAA$)
    and a $\sigma$-weak continuous *-homormorphsim
    $\sigma_1: \AAA \rightarrow \BBB E_1$ and a $\sigma$-weak continuous *-anti-homomorphism
    $\sigma_2: \AAA \rightarrow \BBB E_2$ such that:
    \begin{enumerate}
        \item $\varphi(P) = \sigma_1(P) + \sigma_2(P)$ for every $P \in \PPP_{S}$.
        \item $\tau = \tau \circ (\sigma_1 + \sigma_2)$. Therefore $E_1 + E_2 = I$ if
            $\AAA$ is a finite factor.
        \item If $\AAA$ is a type I factor, then $\sigma_1$ or $\sigma_2$ is the zero map.
    \end{enumerate}
\end{theorem}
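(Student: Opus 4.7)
The plan is to recognize $\Phi$ as a normal Jordan $*$-homomorphism on $\AAA$ and then apply the classical Størmer-type structure theorem, which decomposes such a map as the sum of a $*$-homomorphism and a $*$-anti-homomorphism whose ranges are cut off by orthogonal central projections in the image von Neumann algebra.

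The first step is to extend the transition-probability identity from $\PPP_S$ to $\PPP$. For $P = \sum_\alpha P_\alpha$ and $Q = \sum_\beta Q_\beta$ with pairwise orthogonal components in $\PPP_S$, the orthoadditivity from \cref{ext_Phi_orth} and normality of $\tau$ yield
\begin{align*}
    \tau(\Phi(P)\Phi(Q)) = \sum_{\alpha,\beta}\tau(\varphi(P_\alpha)\varphi(Q_\beta)) = \sum_{\alpha,\beta}\tau(P_\alpha Q_\beta) = \tau(PQ).
\end{align*}
Since $\Phi(I)$ is a projection majorizing every $\Phi(P)$ (by orthoadditivity applied to $I = P + (I-P)$), taking $Q = I$ gives $\tau(\Phi(P)) = \tau(P)$ for all $P \in \PPP$. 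The faithfulness and normality of $\tau$ then promote $\Phi$ to an order-continuous map on increasing nets of projections and, combined with the boundedness and linearity from \cref{ext_C_hom}, to a $\sigma$-weakly continuous map on bounded subsets of $\AAA$.

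Next, polarizing the identity $\Phi(H^2) = \Phi(H)^2$ shows that $\Phi$ is a Jordan $*$-homomorphism. The normal version of Størmer's structure theorem then supplies a central projection $E_1$ in the von Neumann algebra $\Phi(\AAA)''$ (and $E_2 := \Phi(I) - E_1$) such that $\sigma_1 := E_1 \Phi$ is a normal $*$-homomorphism into $\Phi(\AAA)'' E_1$ and $\sigma_2 := E_2 \Phi$ is a normal $*$-anti-homomorphism into $\Phi(\AAA)'' E_2$. The images $\sigma_i(\AAA)$ are $\sigma$-weakly closed (as normal homomorphic or anti-homomorphic images of a von Neumann algebra), so their algebraic sum is itself a von Neumann algebra, which coincides with the C$^*$-algebra $\BBB$ generated by $\Phi(\PPP)$; in particular $E_1$ and $E_2$ belong to $\BBB$ as the central support projections of $\sigma_1$ and $\sigma_2$. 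Item (1) is then immediate, and item (2) follows because $\tau \circ (\sigma_1 + \sigma_2) = \tau \circ \Phi$ agrees with $\tau$ on $\PPP$ and extends by normality to all of $\AAA$; in the finite case, applying this to $I$ forces $\tau(E_1 + E_2) = \tau(I)$, hence $E_1 + E_2 = I$.

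For item (3), when $\AAA \cong \BBB(\HHH)$ is type I, the plan is to invoke the simplicity of $\AAA$ as a W$^*$-algebra together with the Murray--von Neumann uniqueness of minimal projections: if both $\sigma_1$ and $\sigma_2$ were nonzero, they would be injective normal (anti-)homomorphisms whose images are type I subfactors of $\AAA$, and decomposing a single minimal projection $P$ as $\Phi(P) = \sigma_1(P) + \sigma_2(P)$ with orthogonal summands of combined trace $\tau(P)$ becomes incompatible with minimality unless one of the summands vanishes. I expect the main obstacles to be (a) justifying $\sigma$-weak continuity of $\Phi$ from trace preservation in the non-finite semifinite setting, where $\tau$ is only a weight and one must work with the finite-trace ideals, and (b) the type I dichotomy, which may require separate treatment in the finite-dimensional and infinite-dimensional cases.
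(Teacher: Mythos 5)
Your overall strategy coincides with the paper's: extend the transition-probability identity to all of $\PPP$, recognize $\Phi$ as a Jordan (C$^*$-) homomorphism, and invoke St\o rmer's decomposition theorem to split $\Phi$ into a *-homomorphism and a *-anti-homomorphism cut down by central projections of the image algebra; items (1)--(3) then follow as you describe, and your argument for (3) (injectivity of a nonzero normal (anti-)homomorphism of a factor versus minimality of $\Phi(P)$ for $P$ rank one) is the same as the paper's.

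The one place where you genuinely diverge is the proof of $\sigma$-weak continuity, and there your route has a hole that you yourself flag. You propose to deduce normality from trace preservation: for an increasing net $P_\alpha \nearrow P$ one would estimate $\tau(\Phi(P)-\lim_\alpha\Phi(P_\alpha)) = \lim_\alpha \tau(P-P_\alpha) = 0$ and conclude by faithfulness. This works when $\tau(P)<\infty$, but in a properly infinite semifinite factor $\tau$ is only a weight, and for $\tau(P)=\infty$ (e.g.\ $P=I$) the quantities $\tau(P-P_\alpha)$ can all be infinite, so the argument collapses exactly in the case you identify as obstacle (a). The paper sidesteps the trace entirely: since $\Phi$ is bounded, linear, positive, and completely orthoadditive on projections by \cref{ext_Phi_orth}, for every normal state $\rho$ the functional $\rho\circ\sigma_i$ is completely additive on orthogonal families of projections, hence normal by the standard criterion (\cite[Theorem III.2.1.4]{BB}); therefore $\sigma_i$ is $\sigma$-weakly continuous. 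You should replace your trace-based step by this complete-additivity argument (all of whose ingredients you already have), after which the rest of your proof goes through; note also that the paper's version of St\o rmer's theorem (\cite[Lemma 3.2]{ES}) already places $E_1, E_2$ in the C$^*$-algebra $\BBB$ generated by $\Phi(\PPP)$, so you do not need to argue separately that $\Phi(\AAA)''$ coincides with $\BBB$.
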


\begin{proof}
    \cite[Lemma 3.2]{ES} implies that there exist central projections $E_i$ of $\BBB$ with $E_1 + E_2 = I_\BBB$
    such that the map $\sigma_1: A \in \AAA
    \rightarrow \Phi(A)E_1$ (resp. $\sigma_2: A \rightarrow \Phi(A)E_2$) is a *-homomorphism (resp.
    *-anti-homomorphism) and $\Phi(A) = \sigma_1(A) + \sigma_2(A)$ for every $A \in \AAA$.

    We now show that $\sigma_1$ and $\sigma_2$ are $\sigma$-weak continous.
    For every state $\rho \in \AAA_*$ where $\AAA_*$ is the predual of $\AAA$, $\rho \circ \sigma_i$ is
    completely additive by \cite[Theorem III.2.1.4]{BB} and \cref{ext_Phi_orth}. Thus
    $\rho \circ \sigma_i \in \AAA_*$. Since every normal linear functional is a linear
    combination of normal states, we have $\sigma_i$ is $\sigma$-weakly continuous.
    Note that $E_i \in \BBB$, $\BBB E_i = \sigma_i(\AAA)$ is a von Neumann subalgebra of $\AAA$, $i=1,2$
    (see \cite[Proposition 7.1.15]{KRII}) and $\BBB$ is also a von Neumann algebra.

    Since $\varphi$ preserves the transition probabilty, we have
    \begin{align*}
        \tau(P) = \tau(\varphi(P)) = \tau(\sigma_1(P) + \sigma_2(P)), \quad \forall P \in \PPP_{S}.
    \end{align*}
    By the defintion of $\Phi$, $\tau(Q) = \tau(\Phi(Q)) = \tau(\sigma_1(Q)+\sigma_2(Q))$ for
    every $Q \in \PPP$. Note that the finite linear combinations of finite projections in $\AAA$ is
    a $\sigma$-weakly dense *-subalgebra of $\AAA$. Then \cite[Vol. II, VIII. Proposition 3.15]{Tak}
    implies that $\tau = \tau \circ (\sigma_1 + \sigma_2)$.
    Since $\sigma_i(I) = E_i$, $i = 1,2$, $\tau(E_1 + E_2) = \tau(I)$ implies that $E_1 + E_2 = I$
    if $\AAA$ is a finite von Neumann algebra.

    Assume that $\AAA$ is a type I factor. Note that $\sigma_1(P)+ \sigma_2(P) = \Phi(P) $ is
    rank one if $P$ is a rank one projection. Since $\sigma_1$ and
    $\sigma_2$ are $\sigma$-weak continuous, we have $\sigma_1$ or $\sigma_2$ is the zero map.
\end{proof}

\section{Auxiliary results for maps $\varphi:\PPP_c \rightarrow \PPP_c$, $c \in (0, \tau(I)/2)$} \label{aux}

From now on, we assume that $\AAA$ is a semifinite factor which is not type I$_2$ and
$\varphi: \PPP_c \rightarrow \PPP_c$ is a map
preserving the transition probability, where $c \in (0, \tau(I)/2)$ and $\PPP_c \neq \emptyset$ or $\{0\}$.

The following two results must be well-known to experts (\cref{P_Q_commut_iff_con} is essentially
\cite[Lemma 5]{M2}) and we sketch the proof for the reader's convenience.

\begin{lemma}\label{P_orth_incldue}
    Let $P_1, P_2, Q \in \PPP_c$. If $P_1 \bot P_2$, then $Q \leq P_1 + P_2$ if and only
    if $\varphi(Q) \leq \varphi(P_1)+ \varphi(P_2)$.
\end{lemma}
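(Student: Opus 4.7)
The plan is to reduce both sides of the equivalence to a single trace identity, using the basic fact that for projections $Q, R$ with $\tau(Q), \tau(R) < \infty$, one has $Q \leq R$ if and only if $\tau(QR) = \tau(Q)$. One direction is clear; the converse follows because $\tau(Q(I-R)) = \tau((I-R)^{1/2}Q(I-R)^{1/2}) = 0$ together with faithfulness of $\tau$ forces $(I-R)Q = 0$, i.e., $Q \leq R$.

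I would first note that by \cref{pre_orth} the hypothesis $P_1 \perp P_2$ gives $\varphi(P_1) \perp \varphi(P_2)$, so $P_1 + P_2$ and $\varphi(P_1) + \varphi(P_2)$ are both projections of trace $2c$. Applying the equivalence above with $R = P_1 + P_2$ and using $\tau(Q) = c$, we get
\begin{align*}
    Q \leq P_1 + P_2 \iff \tau(Q(P_1+P_2)) = c \iff \tau(QP_1) + \tau(QP_2) = c.
\end{align*}
The identical equivalence, applied to $\varphi(Q) \in \PPP_c$ and the projection $\varphi(P_1)+\varphi(P_2)$ of trace $2c$, yields
\begin{align*}
    \varphi(Q) \leq \varphi(P_1)+\varphi(P_2) \iff \tau(\varphi(Q)\varphi(P_1)) + \tau(\varphi(Q)\varphi(P_2)) = c.
\end{align*}

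Finally, the hypothesis that $\varphi$ preserves the transition probability gives $\tau(\varphi(Q)\varphi(P_i)) = \tau(QP_i)$ for $i = 1, 2$, so the two trace conditions on the right-hand sides coincide, and the desired equivalence follows. The only mild subtlety is checking that the quantities $\tau(QP_i)$ are well-defined and finite in the semifinite (possibly type II$_\infty$) setting, which is immediate since $P_i \in \PPP_c$ has finite trace and $\tau(QP_i) = \tau(P_i^{1/2}QP_i^{1/2}) \in [0, c]$; there is no genuine obstacle in the argument.
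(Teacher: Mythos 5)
Your argument is correct and is essentially the paper's own proof: both reduce the inclusion $Q \leq R$ for a trace-$c$ projection $Q$ and a projection $R$ to the trace identity $\tau(QR) = c$ (the paper phrases it via $\tau(QRQ)$, which is the same by cyclicity and faithfulness of $\tau$), and then invoke \cref{pre_orth} together with preservation of the transition probability to transfer that identity to the images. No issues.
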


\begin{proof}
    By \cref{pre_orth}, $\varphi(P_1)+ \varphi(P_2)$ is a projection. Note that
    \begin{align*}
        \tau(\varphi(Q)[\varphi(P_1)+ \varphi(P_2)]\varphi(Q)) = \tau(Q[P_1+P_2]Q).
    \end{align*}
    Thus $Q \leq P_1 + P_2$ if and only if $\varphi(Q) \leq \varphi(P_1)+ \varphi(P_2)$.
\end{proof}

\begin{lemma}\label{P_Q_commut_iff_con}
    Let $P, Q \in\PPP$ and $E$ be the range projection of $PQP$. Assume that $\tau(P) < \infty$.
    Then $\tau(PQ) \leq \tau(E)$ and $\tau(PQP) = \tau(E)$ if and only if $PQ =QP$.
\end{lemma}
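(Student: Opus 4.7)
The plan is to reduce everything to a statement about the positive operator $PQP$, which lives in $P\AAA P$ and satisfies $0 \le PQP \le P$, and then use faithfulness of the trace on the finite corner $P\AAA P$.

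First I would note the identity $\tau(PQ) = \tau(PQP)$. This follows from the trace property together with $P^2 = P$: by cyclicity $\tau(PQ) = \tau(P\cdot PQ) = \tau(PQ\cdot P) = \tau(PQP)$, and the hypothesis $\tau(P) < \infty$ guarantees that all these quantities are finite (since $0 \le PQP \le P$). So the inequality in question becomes $\tau(PQP) \le \tau(E)$. Next, recall that the range projection $E$ of the positive operator $PQP$ majorises it in the sense that $PQP \le \|PQP\|\,E \le E$, because $\|PQP\| \le \|P\|\|Q\|\|P\| \le 1$. Taking traces gives $\tau(PQP) \le \tau(E)$, which is the desired inequality.

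For the equality case, assume $\tau(PQP) = \tau(E)$. Since $E \le P$ we have $\tau(E) < \infty$, so the positive operator $E - PQP$ has trace zero. Faithfulness of $\tau$ then forces $E = PQP$, i.e., $PQP$ is itself a projection. To extract commutativity from this, I would compute
\begin{align*}
    (QP - PQP)^*(QP - PQP) &= (PQ - PQP)(QP - PQP) \\
    &= PQP - PQPQP - PQPQP + PQPQP \\
    &= PQP - (PQP)^2 = 0,
\end{align*}
which yields $QP = PQP$, and taking adjoints $PQ = PQP$, so $PQ = QP$.

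The reverse implication is immediate: if $PQ = QP$, then $PQ$ is a projection (since $(PQ)^2 = PQPQ = P^2Q^2 = PQ$), and $PQP = P^2Q = PQ$ is a projection equal to its own range projection $E$, giving $\tau(PQP) = \tau(E)$.

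There is no real obstacle here; the only subtlety to be careful about is the finiteness of $\tau(E)$ needed to apply faithfulness of $\tau$ to the positive element $E - PQP$, and this is precisely what the hypothesis $\tau(P) < \infty$ (together with $E \le P$) is for.
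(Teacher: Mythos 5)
Your proof is correct and follows essentially the same route as the paper's: the inequality comes from $0 \le PQP \le E$, and the equality case reduces via faithfulness of $\tau$ (on the finite corner $P\AAA P$) to $PQP = E$, which you correctly show is equivalent to $PQ = QP$. The paper's own proof is a two-line sketch that leaves the faithfulness step and the algebraic verification that $PQP$ being a projection forces $QP = PQP$ implicit; your write-up supplies exactly those details.
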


\begin{proof}
   Note that $0 \leq PQP \leq E$. Thus $\tau(PQP) \leq \tau(E)$.
  It is clear that $PQ=QP$ if and only if $PQP=E$. Thus $PQ=QP$ if and only if
  $\tau(PQP) = \tau(E)$.
\end{proof}

Let $Q, P_1, P_2 \in \PPP_c$ such that $Q \leq P_1 \vee P_2$ and $P_1 P_2 = P_2P_1$.
If $\AAA$ is a type I factor and $\PPP_c$ is the set of the minimal projections in $\AAA$, then
 $P_1 = P_2$ or $P_1 \bot P_2$. Therefore \cref{pre_orth} implies that
 $\varphi(P_1)\varphi(P_2) = \varphi(P_2) \varphi(P_1)$, and
$\varphi(Q) \leq \varphi(P_1) \vee \varphi(P_2)$ by \cref{P_orth_incldue}.
For the rest of this section, we will prove in \cref{P_Q_comm_general_case} and \cref{include_two_case}
that the same result holds for every semifinite factor $\AAA$ which is not type I$_2$ and
transition probability preserving map $\varphi:\PPP_c \rightarrow \PPP_c$, $c \in (0, \tau(I)/2)$.

By the discussion above, we will assume that $\PPP_c$ is not the set of minimal projections of $\AAA$
for the rest of the section.

\begin{lemma}\label{comm_induct_step}
    Let $c_0 = \min(\tau(I) - 2c, c/2)$ and $P, Q$ be two commutative
    projections in $\PPP_c$. If $\tau(PQ) \geq c-c_0$ or $\tau(PQ) \leq c_0$,
    then $\varphi(P)\varphi(Q)= \varphi(Q)\varphi(P)$. In particular, if $\frac{5c}{2} \leq \tau(I)$, then
    $PQ = QP$ implies that $\varphi(P)\varphi(Q) =\varphi(Q)\varphi(P)$ for every $P$, $Q \in \PPP_c$.
\end{lemma}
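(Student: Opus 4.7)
My plan is to construct, in each case, auxiliary projections in $\PPP_c$ whose orthogonality and inclusion relations with $P, Q$ pass through \cref{pre_orth} and \cref{P_orth_incldue} to force $\alpha\beta\alpha$ to be a projection, where $\alpha := \varphi(P)$ and $\beta := \varphi(Q)$. Once this is established, $(\alpha\beta\alpha)^2 = \alpha\beta\alpha$ gives $\tau(\alpha\beta\alpha\beta) = c_1$, whence $\|\alpha\beta - \beta\alpha\|_2^2 = 2c_1 - 2\tau(\alpha\beta\alpha\beta) = 0$. Throughout, write $R := PQ$, $P_1 := P - R$, $Q_1 := Q - R$, $N := I - P - Q + R$ (four pairwise orthogonal projections summing to $I$), with $c_1 := \tau(R)$ and $c_2 := c - c_1$.

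In Case B ($c_1 \leq c_0$), the bound $c_0 \leq \tau(I) - 2c$ gives $\tau(N) \geq 2c_1$, so I pick orthogonal $R_1, R_2 \leq N$ of trace $c_1$ and set $P^* := P_1 + R_1$, $Q^* := Q_1 + R_2 \in \PPP_c$. A direct check yields $P^* \perp Q$, $Q^* \perp P$, $P^* \perp Q^*$, $P \leq Q + P^*$, and $Q \leq P + Q^*$. Writing $\gamma := \varphi(P^*)$ and $\delta := \varphi(Q^*)$, the two lemmas give $\alpha \perp \delta$, $\beta \perp \gamma$, $\gamma \perp \delta$, $\alpha \leq \beta + \gamma$, and $\beta \leq \alpha + \delta$. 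Expanding $\alpha = \alpha(\beta + \gamma)$ gives $\alpha = \alpha\beta\alpha + \alpha\gamma\alpha$, a sum of two positive operators (of traces $c_1$ and $c_2$). From $\beta = \beta(\alpha + \delta)$ together with $\beta\gamma = 0 = \gamma\delta$, a short computation gives $\beta\alpha\gamma = \beta\gamma - \beta\delta\gamma = 0$, so $(\alpha\beta\alpha)(\alpha\gamma\alpha) = 0$ (and by taking adjoints the two factors commute). Two commuting positive operators summing to the projection $\alpha$ with zero product must each be a projection, since $A^2 + B^2 = (A+B)^2 = A+B$ and $A - A^2, B - B^2 \geq 0$ force both differences to vanish.

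For Case A ($c_1 \geq c - c_0$), I reduce to Case B via the single auxiliary $Y := P_1 + T$, where $T \leq N$ has trace $c_1$; this is possible because $\tau(N) \geq c_1$ is equivalent to $\tau(I) \geq 2c$. Then $(P, Y)$ is a commuting pair in $\PPP_c$ with $\tau(PY) = c_2 \leq c_0$, so it falls under Case B provided the trace condition $\tau(I - P \vee Y) = \tau(I) - c - c_1 \geq 2c_2$, i.e., $\tau(I) \geq 3c - c_1$, is verified. A quick case-split confirms this: either $c_0 = c/2$ (so $c_1 \geq c/2$ and $3c - c_1 \leq 5c/2 \leq \tau(I)$), or $c_0 = \tau(I) - 2c$ (so $c_1 \geq 3c - \tau(I)$ and $3c - c_1 \leq \tau(I)$). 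Hence Case B applied to $(P, Y)$ yields $\alpha\delta = \delta\alpha$ for $\delta := \varphi(Y)$. Commutativity then makes $\alpha\delta\alpha = \alpha\delta$ an idempotent self-adjoint operator, hence a projection contained in $\alpha$. The relation $P \leq Q + Y$ with $Q \perp Y$ gives $\alpha = \alpha\beta\alpha + \alpha\delta\alpha$, so the complementary summand $\alpha\beta\alpha = \alpha - \alpha\delta\alpha$ is also a projection, and the same $L^2$ computation closes the argument.

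The ``in particular'' statement is then immediate: $\tau(I) \geq 5c/2$ forces $c_0 = c/2$, so the conditions $\tau(PQ) \leq c/2$ or $\tau(PQ) \geq c/2$ exhaust all possibilities for a commuting pair in $\PPP_c$. I expect the main non-routine step to be the Case A reduction — recognizing that the single projection $Y = P_1 + T$ converts the large-overlap problem for $(P, Q)$ into a small-overlap problem for $(P, Y)$, and that the commutativity of $\alpha$ with $\delta$ obtained from Case B then collapses $\alpha\delta\alpha$, and hence its complement $\alpha\beta\alpha$ within $\alpha$, into projections. Everything else is orthogonality bookkeeping and trace arithmetic.
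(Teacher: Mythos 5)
Your argument is correct. Its skeleton matches the paper's --- in both, auxiliary projections in $\PPP_c$ are pushed through \cref{pre_orth} and \cref{P_orth_incldue} until one image projection decomposes as a sum of two positive operators with vanishing mutual product --- but the two key technical steps are carried out differently. The paper reduces the regime $\tau(PQ)\le c_0$ to the regime $\tau(PQ)\ge c-c_0$ (via a single $P'$ with $P\perp P'$, $Q\le P+P'$) and then treats the large-overlap case directly with one further auxiliary $Q'$ disjoint from $P\vee Q$; you reverse the reduction, treating the small-overlap case directly with the symmetric pair $P^*=P_1+R_1$, $Q^*=Q_1+R_2$ and folding the large-overlap case into it via $Y=P_1+T$ (your check that $\tau(I)\ge 3c-c_1$ holds in both branches of $c_0=\min(\tau(I)-2c,\,c/2)$ is exactly where the hypothesis enters, and it is right). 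More substantively, your finishing move bypasses \cref{P_Q_commut_iff_con} entirely: where the paper compares traces of the range projections $E,F$ of $\varphi(P')\varphi(Q)\varphi(P')$ and $\varphi(P')\varphi(Q')\varphi(P')$ to reach the contradiction $c<\tau(E+F)\le c$, you observe that two commuting positive operators with zero product summing to a projection must each be a projection, and then kill the commutator through $\tau\bigl((\alpha\beta-\beta\alpha)^*(\alpha\beta-\beta\alpha)\bigr)=2\tau(\alpha\beta)-2\tau(\alpha\beta\alpha\beta)=0$ and faithfulness of $\tau$. This makes your proof a little longer but self-contained at this point, whereas the paper's is shorter because the range-projection lemma absorbs the idempotency work. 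Both proofs rely on the same implicit existence of subprojections of prescribed trace inside $N=I-P\vee Q$, which is unproblematic in type II and, in type I, follows because $PQ$ is itself a projection, so $\tau(PQ)$ is a multiple of the minimal trace.
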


\begin{proof}
   Let $P' \in \PPP_c$ such that $P \bot P'$ and $Q \leq P + P'$.
   By \cref{P_orth_incldue}, it is clear that $\varphi(P)\varphi(Q) = \varphi(Q)\varphi(P)$
   if and only if $\varphi(P')\varphi(Q) = \varphi(Q)\varphi(P')$. Thus we may assume that
   $\tau(PQ) \geq c-c_0$.

    Since $2c + c_0 \leq \tau(I)$, we can
    choose $Q' \in \PPP_c$ such that $Q' \bot (P \vee Q)$, $Q'P' = P'Q'$ and $\tau(P'Q') = \tau(PQ)$.
    Note that $\tau(P'(Q+Q')) = c$ and $P' \leq Q+Q'$. By \cref{pre_orth},
    \begin{align*}
        0=\varphi(Q')\varphi(Q) =  \varphi(Q')[\varphi(P)+ \varphi(P')]\varphi(Q) =
        \varphi(Q')\varphi(P')\varphi(Q).
    \end{align*}
    Thus $[\varphi(P')\varphi(Q')\varphi(P')][\varphi(P')\varphi(Q)\varphi(P')] = 0$.

    Let $E$ and $F$ be the range projections of $\varphi(P')\varphi(Q)\varphi(P')$ and
    $\varphi(P')\varphi(Q')\varphi(P')$ respectively.
    $[\varphi(P')\varphi(Q')\varphi(P')][\varphi(P')\varphi(Q)\varphi(P')] = 0$ implies that
    $EF = 0$. If $\varphi(P')\varphi(Q) \neq \varphi(Q)\varphi(P')$,
    \cref{P_Q_commut_iff_con} implies that $\tau(E) > \tau(\varphi(P')\varphi(Q)) = \tau(P'Q)$.
    Note that $\tau(F) \geq \tau(P'Q')$ and $E \vee F = E + F \leq \varphi(P')$. Thus we have
    \begin{align*}
        c = \tau(P'(Q+Q')) < \tau(E+F) \leq \tau(P') = c.
    \end{align*}
    Hence $\varphi(P)\varphi(Q)$ must equals $\varphi(Q)\varphi(P)$.
\end{proof}

\begin{lemma}\label{comm_induction}
    Let $c_1 \in (0, c)$ such that $\PPP_{c_1} \neq \emptyset$. Suppose that
    for every $P$, $Q \in \PPP_c$ such that $PQ = QP$ and
    $\tau(PQ) \leq c_1$, we have $\varphi(P) \varphi(Q) = \varphi(Q)\varphi(P)$.
    Then $PQ = QP$ and $\tau(PQ) \leq 2c_1$ also implies that $\varphi(P) \varphi(Q) = \varphi(Q)\varphi(P)$.
\end{lemma}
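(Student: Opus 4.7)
The plan is to produce two auxiliary projections $P_1, P_2 \in \PPP_c$ to which the inductive hypothesis applies (directly for the $Q$-intersection and via the replacement trick from \cref{comm_induct_step} for the $P$-intersection), together with a third projection $R \in \PPP_c$ orthogonal to $Q$, in such a way that the operator identity $P + R = P_1 + P_2$ holds inside $\AAA$. Transporting this identity to the $\varphi$-images and using orthogonality of $\varphi(R)$ with $\varphi(Q)$ will make the commutator $[\varphi(P), \varphi(Q)]$ collapse. We may assume $\tau(PQ) > c_1$, else the hypothesis applies directly.

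The construction lives in the abelian algebra generated by $P$, $Q$, and two extra commuting pieces in $D := (I-P)(I-Q)$. Split $PQ = E_1 + E_2$ as an orthogonal sum of projections with $\tau(E_i) \leq c_1$. Since $c < \tau(I)/2$ we have $\tau(D) = \tau(I) - 2c + \tau(PQ) \geq \tau(PQ)$, so we may pick orthogonal subprojections $F_1, F_2 \leq D$ with $\tau(F_i) = \tau(E_{3-i})$. Define
\[
P_i := (P - E_{3-i}) + F_i, \qquad R := P(I-Q) + F_1 + F_2,
\]
each an orthogonal sum of projections lying in $\PPP_c$. A direct calculation in the abelian algebra yields $P + R = P_1 + P_2$, $R \perp Q$, $\tau(P_i Q) = \tau(E_i) \leq c_1$, and $\tau(P_i P) = c - \tau(E_{3-i})$. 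The hypothesis applied directly gives $\varphi(P_i)\varphi(Q) = \varphi(Q)\varphi(P_i)$. To obtain $\varphi(P_i)\varphi(P) = \varphi(P)\varphi(P_i)$, take $P_i' \in \PPP_c$ with $P_i' \perp P_i$ and $P_i' \geq E_{3-i}$ (so $P_i + P_i' \geq P$, possible since $c \leq \tau(I)/2$); this gives $\tau(P_i' P) = \tau(E_{3-i}) \leq c_1$, and the equivalence from the proof of \cref{comm_induct_step} transfers commutativity of $\varphi(P_i')$ with $\varphi(P)$ to commutativity of $\varphi(P_i)$ with $\varphi(P)$.

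The crucial step is to upgrade the identity $P + R = P_1 + P_2$ in $\AAA$ to the operator identity $\varphi(P) + \varphi(R) = \varphi(P_1) + \varphi(P_2)$. Let $Y := \varphi(P) + \varphi(R) - \varphi(P_1) - \varphi(P_2)$, a self-adjoint element. Using preservation of transition probabilities together with the trace computations $\tau(PR) = \tau(P_1 P_2) = c - \tau(PQ)$ and $\tau(PP_i) + \tau(RP_i) = 2c - \tau(PQ)$, each of the three quantities $\tau((\varphi(P)+\varphi(R))^2)$, $\tau((\varphi(P_1)+\varphi(P_2))^2)$, and $\tau((\varphi(P)+\varphi(R))(\varphi(P_1)+\varphi(P_2)))$ evaluates to $4c - 2\tau(PQ)$; expanding $\tau(Y^2) = \tau(A^2) - 2\tau(AB) + \tau(B^2)$ with $A = \varphi(P)+\varphi(R)$ and $B = \varphi(P_1) + \varphi(P_2)$ then gives $\tau(Y^2) = 0$, so $Y = 0$ by faithfulness of $\tau$.

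Finally, multiplying the identity $\varphi(P) + \varphi(R) = \varphi(P_1) + \varphi(P_2)$ by $\varphi(Q)$ on both sides, using $\varphi(R)\varphi(Q) = 0 = \varphi(Q)\varphi(R)$ (from $R \perp Q$ and \cref{pre_orth}) together with commutativity of $\varphi(P_i)$ with $\varphi(Q)$, yields
\[
\varphi(P)\varphi(Q) = \varphi(P_1)\varphi(Q) + \varphi(P_2)\varphi(Q) = \varphi(Q)\varphi(P_1) + \varphi(Q)\varphi(P_2) = \varphi(Q)\varphi(P),
\]
establishing the required commutativity. The main technical obstacle is the $\tau(Y^2) = 0$ computation, which amounts to verifying the listed trace identities in the abelian algebra generated by $P, Q, F_1, F_2$ and depends essentially on the trace-preservation of $\varphi$ on $\PPP_c$.
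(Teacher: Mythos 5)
Your proof is correct, but it takes a genuinely different route from the paper's. The paper chooses two \emph{orthogonal} projections $P_1,P_2\in\PPP_c$ with $P\vee Q\le P_1+P_2$, $PQ=PP_2+QP_1$ and $\tau(P_1Q),\,\tau(P_2P)\le c_1$, deduces that each $\varphi(P_i)$ commutes with \emph{both} $\varphi(P)$ and $\varphi(Q)$, and then splits the range projection of $\varphi(P)\varphi(Q)\varphi(P)$ along $\varphi(P_1),\varphi(P_2)$ to force $\tau(E_1+E_2)=\tau(\varphi(P)\varphi(Q))$, concluding via \cref{P_Q_commut_iff_con}. You instead manufacture the exact linear identity $P+R=P_1+P_2$ with $R\perp Q$ and $\tau(P_iQ)=\tau(E_i)\le c_1$, upgrade it to the operator identity $\varphi(P)+\varphi(R)=\varphi(P_1)+\varphi(P_2)$ by the $\tau(Y^2)=0$ computation (which checks out: all three traces equal $4c-2\tau(PQ)$, using only transition-probability preservation and traciality), and then multiply by $\varphi(Q)$. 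Both arguments ultimately rest on the same splitting $PQ=E_1+E_2$ with $\tau(E_i)\le c_1$, which is also available in the type I case since $c_1$ is an attainable trace value there. Your route buys two things: it bypasses the range-projection lemma entirely, and---as your own final display shows---it never actually uses commutativity of $\varphi(P_i)$ with $\varphi(P)$, so the entire paragraph constructing $P_i'$ and invoking the replacement trick from \cref{comm_induct_step} is redundant and can be deleted. The trace-norm device you use is the same one that drives \cref{def_half_lemma} and \cref{def_half_eq} later in the paper, so it is well within the paper's toolbox; the only point worth spelling out is that every operator involved has finite trace, so the expansion of $\tau(Y^2)$ and the appeal to faithfulness of the tracial weight are legitimate.
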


\begin{proof}
    We choose two orthogonal projections $P_1$, $P_2$ in $\PPP_c$ such that
    $P_iP = PP_i$, $P_iQ = QP_i$, $i = 1, 2$ and $PQ = PP_2 + QP_1$,
    \begin{align*}
        P \leq P_1 + P_2,\quad Q \leq P_1 + P_2, \quad \tau(P_1Q) \leq c_1, \quad \tau(P_2 P) \leq c_1.
    \end{align*}
    By \cref{P_orth_incldue} and the assumption, we have
    \begin{align*}
        \varphi(P_i)\varphi(P) = \varphi(P)\varphi(P_i), \quad \varphi(P_i)\varphi(Q) = \varphi(Q)\varphi(P_i),
        \quad i = 1,2.
    \end{align*}
    Thus $\varphi(P_1)\varphi(Q)$ and $\varphi(P_2)\varphi(P)$ are two projections whose traces are
    $\tau(P_1Q)$ and $\tau(P_2P)$ respectively.
    Let $E_i$ be the range projection of $\varphi(P)\varphi(Q)\varphi(P)\varphi(P_i)$, $i=1,2$.
    It is clear that $E_1E_2 = 0$ and
    \begin{align*}
        \tau(E_1) \leq \tau(\varphi(Q)\varphi(P_1)) = \tau(QP_1), \quad \tau(E_2) \leq \tau(\varphi(P)\varphi(P_2))
        = \tau(PP_2).
    \end{align*}
    Note that the range projection of $\varphi(P)\varphi(Q)\varphi(P)$ is $E_1 + E_2$ and
    \begin{align*}
        \tau(\varphi(P)\varphi(Q)) \leq \tau(E_1 + E_2) \leq \tau(QP_1)+ \tau(PP_2) = \tau(PQ).
    \end{align*}
    Therefore $\tau(E_1 + E_2) = \tau(\varphi(P)\varphi(Q))$ and \cref{P_Q_commut_iff_con} implies that
    $\varphi(P)\varphi(Q) = \varphi(Q)\varphi(P)$.
\end{proof}

\begin{theorem}\label{P_Q_comm_general_case}
    Let $P, Q \in \PPP_c$. If $PQ =QP$ then $\varphi(P)\varphi(Q)= \varphi(Q)\varphi(P)$.
\end{theorem}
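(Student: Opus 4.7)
The plan is to reduce the question of whether $\varphi(P)$ and $\varphi(Q)$ commute to the single numerical parameter $t:=\tau(PQ)$ — which for commuting $P,Q\in\PPP_c$ lies in $[0,c]$ since $PQ$ is a projection dominated by $P$ — and then cover every such $t$ by combining \cref{comm_induct_step} as a base case with \cref{comm_induction} as a doubling step.

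Setting $c_0:=\min(\tau(I)-2c,c/2)$, \cref{comm_induct_step} already yields the desired conclusion whenever $t\in[0,c_0]\cup[c-c_0,c]$. When $c_0=c/2$ — which covers every semifinite factor with $\tau(I)=\infty$, and every finite factor with $\tau(I)\ge 5c/2$ — the two intervals meet at $c/2$ and exhaust $[0,c]$, so there is nothing further to prove; this is precisely the content of the final clause of \cref{comm_induct_step}.

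In the remaining range $2c<\tau(I)<5c/2$ one has $c_0=\tau(I)-2c\in(0,c/2)$, and the open middle interval $(c_0,c-c_0)$ is not yet handled. Here I would apply \cref{comm_induction} iteratively: starting from $c_1=c_0$, each application doubles the range of $t$ on which the conclusion is known, so after $k$ steps commutation is established for all commuting pairs with $t\in[0,2^k c_0]$. Since $c_0>0$, some $k$ satisfies $2^k c_0\ge c-c_0$, at which point $[0,2^k c_0]\cup[c-c_0,c]$ exhausts $[0,c]$ and the theorem is proved.

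The main obstacle I anticipate is verifying the hypotheses of \cref{comm_induction} — namely $c_1\in(0,c)$ and $\PPP_{c_1}\ne\emptyset$ — throughout the iteration. In a type II factor both are automatic because projections of every trace value in $[0,\tau(I)]$ exist. The only finite type I situation falling in the remaining range is $\AAA\cong M_n(\C)$ with $c\ge 3$ an integer and $c_0=n-2c$ a positive integer strictly less than $c/2$; in this case every iterate $2^k c_0$ is itself a positive integer remaining strictly less than $c$ until the iteration terminates, so the required non-emptiness is automatic. With this verification in place the doubling scheme described above closes the argument.
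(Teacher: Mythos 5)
Your proposal is correct and follows essentially the same route as the paper: \cref{comm_induct_step} supplies the base case $\tau(PQ)\in[0,c_0]\cup[c-c_0,c]$, and iterated applications of \cref{comm_induction} close the middle gap when $2c<\tau(I)<5c/2$. Your explicit check that each iterate $2^kc_0$ used as $c_1$ stays in $(0,c)$ with $\PPP_{2^kc_0}\neq\emptyset$ (automatic in type II, and an integer below $c$ in $M_n(\C)$) merely fills in detail that the paper's two-line proof leaves implicit.
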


\begin{proof}
    If $5c/2 \leq \tau(I)$, the theorem is proved by \cref{comm_induct_step}.
    Assume $2c < \tau(I) < 5c/2$, let $c_0 = \tau(I) - 2c < c/2$. Note that
    $\PPP_{c_0} \neq \emptyset$. Then \cref{comm_induction} implies the result.
\end{proof}

\begin{lemma}\label{include_comm_ind_step}
    Let $c_0 = \min(\tau(I)-2c, c)$ and $P_1, P_2, Q \in \PPP_c$.
    Suppose that $P_1 P_2 = P_2 P_1$ and $\tau(P_1P_2) \leq c_0$.
    Then $Q \leq P_1 \vee P_2$ implies $\varphi(Q) \leq \varphi(P_1) \vee \varphi(P_2)$.
\end{lemma}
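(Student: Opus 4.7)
The plan is to reduce the inclusion $\varphi(Q) \leq \varphi(P_1) \vee \varphi(P_2)$ to several applications of \cref{P_orth_incldue} by introducing two auxiliary projections in $\PPP_c$, and then to carry out the final computation inside an abelian von Neumann subalgebra.

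Set $E = P_1 P_2 = P_1 \wedge P_2$ and write $P_1 = E + A$, $P_2 = E + B$, so that $E, A, B$ are pairwise orthogonal with $\tau(A) = \tau(B) = c - \tau(E)$ and $P_1 \vee P_2 = E + A + B$. The case $\tau(E) = c$ forces $P_1 = P_2 = Q$ and is trivial, so assume $\tau(E) < c$. The first step is to choose two mutually orthogonal projections $A'', B''$ of trace $\tau(E)$, both orthogonal to $P_1 \vee P_2$, and to set $R_1 = A + A''$ and $R_2 = B + B''$, which lie in $\PPP_c$. The trace budget required for this choice, namely $\tau(P_1 \vee P_2) + 2\tau(E) = 2c + \tau(E) \leq \tau(I)$, is supplied by the hypothesis $\tau(E) \leq c_0 \leq \tau(I) - 2c$. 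By construction $P_1, P_2, R_1, R_2$ pairwise commute, $R_1 \perp P_2$, $R_2 \perp P_1$, $R_1 \perp R_2$, and an immediate check gives $P_1, Q \leq R_1 + P_2$ as well as $P_2, Q \leq R_2 + P_1$.

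Applying \cref{P_orth_incldue} to these four orthogonal-pair inclusions yields $\varphi(Q) \leq \varphi(P_1) + \varphi(R_2)$, $\varphi(Q) \leq \varphi(P_2) + \varphi(R_1)$, $\varphi(P_1) \leq \varphi(R_1) + \varphi(P_2)$, and $\varphi(P_2) \leq \varphi(R_2) + \varphi(P_1)$. Pairwise commutativity of $P_1, P_2, R_1, R_2$ together with \cref{P_Q_comm_general_case} gives pairwise commutativity of $\varphi(P_1), \varphi(P_2), \varphi(R_1), \varphi(R_2)$, so all remaining identities can be read off inside the abelian von Neumann subalgebra they generate.

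In that abelian algebra, the inclusion $\varphi(P_1) \leq \varphi(R_1) + \varphi(P_2)$ combined with $\varphi(R_1) \perp \varphi(P_2)$ forces $\varphi(P_1) = \varphi(P_1)\varphi(R_1) + \varphi(P_1)\varphi(P_2)$, with the symmetric identity for $\varphi(P_2)$. These decompositions expand $\varphi(P_1) \vee \varphi(P_2) = \varphi(P_1) + \varphi(P_2) - \varphi(P_1)\varphi(P_2)$ as $\varphi(P_1)\varphi(P_2) + \varphi(P_1)\varphi(R_1) + \varphi(P_2)\varphi(R_2)$, and a parallel expansion of the product $(\varphi(P_1) + \varphi(R_2))(\varphi(P_2) + \varphi(R_1))$ yields the same sum, since $\varphi(R_1)\varphi(R_2) = 0$. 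Hence the meet of the two upper bounds on $\varphi(Q)$ equals $\varphi(P_1) \vee \varphi(P_2)$, giving the desired inclusion. The main obstacle is the first step, where the hypothesis $\tau(P_1 P_2) \leq c_0$ is used exactly to secure room for a pair $A'', B''$ of disjoint projections of trace $\tau(E)$ orthogonal to $P_1 \vee P_2$.
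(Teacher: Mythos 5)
Your proof is correct and follows essentially the same strategy as the paper: use the trace room $2c+\tau(P_1P_2)\leq\tau(I)$ to build auxiliary projections in $\PPP_c$, express $\varphi(P_1)\vee\varphi(P_2)$ as the meet of two upper bounds each of the form covered by \cref{P_orth_incldue}, and invoke \cref{P_Q_comm_general_case} to justify the computation. The only (cosmetic) differences are that the paper's two upper bounds are $\varphi(P_1)+\varphi(P_3)$ and $\varphi(P_1)+\varphi(P_4)$ with $P_3,P_4\perp P_1$ both containing $P_2-P_1P_2$, whereas yours are the symmetric pair $\varphi(P_1)+\varphi(R_2)$ and $\varphi(P_2)+\varphi(R_1)$, and the paper identifies the meet with the join by a trace count while you verify it by direct multiplication in the commuting family.
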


\begin{proof}
   Since $2c + \tau(P_1P_2) \leq \tau(I)$, we can choose $P_3, P_4 \in \PPP_c$ such that
    $\{P_1, P_2, P_3, P_4\}$ are mutually commutative and
    \begin{align*}
        P_3 \bot P_1, \quad P_4 \bot P_1, \quad P_2 \leq P_1 + P_3, \quad P_2 \leq P_1 +P_4,
        \quad P_3 P_2 = P_4P_2 = P_3P_4.
    \end{align*}
   By \cref{P_orth_incldue}, we have
    \begin{align*}
        \varphi(P_2) \leq \varphi(P_1)+ \varphi(P_3), \quad \varphi(P_2) \leq \varphi(P_1)+ \varphi(P_4).
    \end{align*}
    Since $\varphi(P_1) \vee \varphi(P_2) \leq
    [\varphi(P_1)+ \varphi(P_3)] \wedge [\varphi(P_1)+ \varphi(P_4)]$, \cref{P_Q_comm_general_case} implies
    that
    \begin{align*}
        \tau(\varphi(P_1) \vee \varphi(P_2)) &= 2c - \tau(P_1 P_2)= c + \tau(P_3P_4)\\
        &=\tau([\varphi(P_1)+ \varphi(P_3)] \wedge [\varphi(P_1)+ \varphi(P_4)]).
    \end{align*}
    Thus $\varphi(P_1) \vee \varphi(P_2)= [\varphi(P_1)+ \varphi(P_3)] \wedge [\varphi(P_1)+ \varphi(P_4)]$.
    By \cref{P_orth_incldue},
    \begin{align*}
        \varphi(Q) \leq [\varphi(P_1)+ \varphi(P_3)] \wedge [\varphi(P_1)+ \varphi(P_4)]
        =\varphi(P_1) \vee \varphi(P_2).
    \end{align*}
\end{proof}

\begin{lemma}\label{include_comm_induction}
    Let $c_1 \in (0, c)$ such that $\PPP_{c_1} \neq \emptyset$. Suppose that for every
    $P_1, P_2, Q \in \PPP_c$ such that $P_1 P_2 = P_2 P_1$, $\tau(P_1P_2) \leq c_1$
    and $Q \leq P_1 \vee P_2$, we have $\varphi(Q) \leq \varphi(P_1) \vee \varphi(P_2)$. Then
    $P_1 P_2 = P_2 P_1$, $\tau(P_1P_2) \leq 2c_1$ and $Q \leq P_1 \vee P_2$ also implies that
    $\varphi(Q) \leq \varphi(P_1) \vee \varphi(P_2)$.
\end{lemma}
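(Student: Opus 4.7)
The plan is to mimic \cref{include_comm_ind_step} by producing two auxiliary projections $P_2^{(1)}, P_2^{(2)} \in \PPP_c$ that commute pairwise with $P_1$ and $P_2$, each with overlap at most $c_1$ against $P_1$, and each joining with $P_1$ to something at least $P_1 \vee P_2$; the inductive hypothesis then applies to the two pairs $(P_1, P_2^{(i)})$, and intersecting the two conclusions produces $\varphi(Q) \leq \varphi(P_1) \vee \varphi(P_2)$.

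Setting $a := \tau(P_1 P_2) \leq 2c_1$ and $B := P_2 - P_1 P_2$, I split the overlap as an orthogonal sum $P_1 P_2 = S_1 + S_2$ with $\tau(S_i) \leq c_1$ (the assumption $\PPP_{c_1} \neq \emptyset$ guarantees this is possible). Because $c < \tau(I)/2$, the projection $I - (P_1 \vee P_2)$ has trace $\tau(I) - 2c + a \geq a$, so I can pick orthogonal subprojections $T_1, T_2 \leq I - (P_1 \vee P_2)$ with $\tau(T_i) = a - \tau(S_i)$. I then set
\begin{align*}
    P_2^{(i)} := B + S_i + T_i, \quad i = 1, 2.
\end{align*}
A direct check from disjointness of supports gives $\tau(P_2^{(i)}) = c$, the mutual commutativity of $\{P_1, P_2, P_2^{(1)}, P_2^{(2)}\}$, the identity $P_1 P_2^{(i)} = S_i$ (so $\tau(P_1 P_2^{(i)}) \leq c_1$), and $P_1 \vee P_2^{(i)} = P_1 + B + T_i \supseteq P_1 \vee P_2 \supseteq Q, P_2$. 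The inductive hypothesis applied to $(P_1, P_2^{(i)})$ with $Q$ and with $P_2$ then delivers
\begin{align*}
    \varphi(Q), \varphi(P_2) \leq E_i := \varphi(P_1) \vee \varphi(P_2^{(i)}), \quad i = 1, 2,
\end{align*}
so $\varphi(Q) \leq E_1 \wedge E_2$ and $\varphi(P_1) \vee \varphi(P_2) \leq E_1 \wedge E_2$.

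It remains to verify equality $\varphi(P_1) \vee \varphi(P_2) = E_1 \wedge E_2$, which I would do by a trace computation. The decisive input is \cref{P_Q_comm_general_case}: it guarantees that $\varphi(P_1)$, $\varphi(P_2^{(1)})$, $\varphi(P_2^{(2)})$ pairwise commute, so in the commutative von Neumann algebra they generate distributivity yields $E_1 \wedge E_2 = \varphi(P_1) \vee [\varphi(P_2^{(1)}) \varphi(P_2^{(2)})]$. The transition probability condition then gives $\tau(\varphi(P_2^{(1)}) \varphi(P_2^{(2)})) = \tau(P_2^{(1)} P_2^{(2)}) = \tau(B) = c - a$ and $\tau(\varphi(P_1) \varphi(P_2^{(1)}) \varphi(P_2^{(2)})) = \tau(P_1 B) = 0$, whence $\tau(E_1 \wedge E_2) = c + (c-a) - 0 = 2c - a = \tau(\varphi(P_1) \vee \varphi(P_2))$. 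Equality of finite traces forces equality of the projections, completing the argument.

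The main obstacle is controlling the meet $E_1 \wedge E_2$ from above: without \cref{P_Q_comm_general_case} providing pairwise commutativity of the image triple, the meet of two joins of arbitrary projections admits no usable trace formula, and the whole strategy depends on being able to invoke distributivity in a commutative subalgebra.
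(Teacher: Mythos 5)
Your construction is sound and genuinely different from the paper's. The paper also reduces to two auxiliary pairs of small overlap, but it splits the roles symmetrically: writing $P_1 = A + C$, $P_2 = B + C$ with $C = P_1P_2$ and $C = C_1 + C_2$, it takes $Q_1 \supseteq B + C_1$ paired with $P_1$ and $Q_2 \supseteq A + C_2$ paired with $P_2$, and then proves $[\varphi(P_1)\vee\varphi(Q_1)]\wedge[\varphi(P_2)\vee\varphi(Q_2)] = \varphi(P_1)\vee\varphi(P_2)$ by exhibiting explicit orthogonal pieces $\varphi(Q_1)[I-\varphi(P_2)]$ and $\varphi(Q_2)[I-\varphi(P_1)]$ inside $\varphi(Q_1)+\varphi(Q_2)$ that the meet must avoid. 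You instead keep $P_1$ fixed, attach both halves $S_1, S_2$ of the overlap to copies $P_2^{(i)} = B + S_i + T_i$ of $P_2$, and close the argument by distributivity in the abelian algebra generated by the commuting images (supplied by \cref{P_Q_comm_general_case}). Your version is arguably cleaner, and it dispenses with the paper's (harmless but unstated) reduction to $P_1 \neq P_2$.

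There is one step whose justification as written is wrong: you claim that ``the transition probability condition gives $\tau(\varphi(P_1)\varphi(P_2^{(1)})\varphi(P_2^{(2)})) = \tau(P_1 B) = 0$.'' The hypothesis on $\varphi$ only controls traces of \emph{two-fold} products of images, so it says nothing directly about this triple product. Fortunately the error is cosmetic: the triple product is a product of three pairwise commuting projections, hence a projection with $\tau(\varphi(P_1)\varphi(P_2^{(1)})\varphi(P_2^{(2)})) \geq 0$, and it enters your formula with a minus sign, so you get
\begin{align*}
\tau(E_1 \wedge E_2) = c + (c-a) - \tau(\varphi(P_1)\varphi(P_2^{(1)})\varphi(P_2^{(2)})) \leq 2c - a = \tau(\varphi(P_1)\vee\varphi(P_2))
\end{align*}
for free. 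Combined with the containment $\varphi(P_1)\vee\varphi(P_2) \leq E_1\wedge E_2$ you already established, the traces are squeezed to equality and the two projections coincide (and, a posteriori, the triple product does vanish). With that one-line repair the proof is complete.
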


\begin{proof}
    We assume that $P_1 \neq P_2$ and
    choose $Q_1, Q_2 \in \PPP_c$ such that $\{P_1, P_2, Q_1, Q_2\}$ is a family of mutually
    commutative projections, $Q_1 \bot Q_2$, $P_1 \vee P_2 \leq Q_1+Q_2$, $P_1Q_1 + P_2Q_2 = P_1P_2$,
    \begin{align*}
        &\tau(P_1Q_1) = a_1, \quad P_2 \leq P_1 \vee Q_1,\\
        &\tau(P_2Q_2) = a_2, \quad P_1 \leq P_2 \vee Q_2,
    \end{align*}
    where $0< a_i \leq c_1$ and $a_1 + a_2 = \tau(P_1P_2)$. Therefore, we have
    \begin{align*}
        \varphi(P_1) \vee \varphi(P_2) \leq [\varphi(P_1) \vee \varphi(Q_1)] \wedge
        [\varphi(P_2) \vee \varphi(Q_2)].
    \end{align*}
    And \cref{P_Q_comm_general_case} implies that
    \begin{align*}
        \varphi(Q_1)[I-\varphi(P_2)] \bot [\varphi(P_2) \vee \varphi(Q_2)], \quad
        \varphi(Q_2)[I-\varphi(P_1)] \bot [\varphi(P_1) \vee \varphi(Q_1)],
    \end{align*}
    and
    \begin{align*}
        \tau(\varphi(Q_1)[I-\varphi(P_2)]) = a_2, \quad \tau(\varphi(Q_2)[I-\varphi(P_1)) = a_1.
    \end{align*}
    Since $\varphi(Q_1)[I-\varphi(P_2)] \bot  \varphi(Q_2)[I-\varphi(P_1)]$ and
    \begin{align*}
        (\varphi(Q_1)[I-\varphi(P_2)] + \varphi(Q_2)[I-\varphi(P_1)) \bot
        ([\varphi(P_1) \vee \varphi(Q_1)] \wedge [\varphi(P_2) \vee \varphi(Q_2)]),
    \end{align*}
    we have
    \begin{align*}
        2c - \tau(P_1P_2) &= \tau(\varphi(P_1) \vee \varphi(P_2)) \\
        &\leq \tau([\varphi(P_1) \vee \varphi(Q_1)] \wedge
        [\varphi(P_2) \vee \varphi(Q_2)] \\
        &\leq \tau(\varphi(Q_1) + \varphi(Q_2))
        - \tau(\varphi(Q_1)[I-\varphi(P_2)] + \varphi(Q_2)[I-\varphi(P_1)])\\
        &=2c - \tau(P_1P_2).
    \end{align*}
     Therefore, $\varphi(P_1) \vee \varphi(P_2) = [\varphi(P_1) \vee \varphi(Q_1)] \wedge
        [\varphi(P_2) \vee \varphi(Q_2)]$.
    Note that $Q \leq P_1 \vee P_2 \leq [P_1 \vee Q_1] \wedge [P_2 \vee Q_2]$. And the
    assumption implies that
    \begin{align*}
        \varphi(Q) \leq  [\varphi(P_1) \vee \varphi(Q_1)] \wedge
        [\varphi(P_2) \vee \varphi(Q_2)] = \varphi(P_1) \vee \varphi(P_2).
    \end{align*}
\end{proof}

\begin{theorem}\label{include_two_case}
    Let $P_1, P_2, Q \in \PPP_c$.
    Assume that $P_1 P_2 = P_2 P_1$. Then $Q \leq P_1 \vee P_2$ implies
    $\varphi(Q) \leq \varphi(P_1) \vee \varphi(P_2)$.
\end{theorem}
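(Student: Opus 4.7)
The plan is to mirror the proof of \cref{P_Q_comm_general_case}, replacing its two ingredients by the inclusion-specific analogues \cref{include_comm_ind_step} and \cref{include_comm_induction}. Since $\tau(P_1 P_2) \leq \min(\tau(P_1), \tau(P_2)) = c$ automatically, it suffices to establish the conclusion for every value of $\tau(P_1 P_2)$ in $[0, c]$.

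The first step is to split on the size of $\tau(I)$ relative to $c$. When $\tau(I) \geq 3c$, the constant $c_0 = \min(\tau(I) - 2c, c)$ appearing in \cref{include_comm_ind_step} equals $c$, so that lemma already delivers the conclusion for every admissible triple $P_1, P_2, Q$ and nothing further is needed. When $2c < \tau(I) < 3c$, instead, $c_0 = \tau(I) - 2c$ is strictly less than $c$, and \cref{include_comm_ind_step} only covers the regime $\tau(P_1 P_2) \leq c_0$. Taking this as the base case, I would then iteratively invoke \cref{include_comm_induction}, each application doubling the admissible threshold, to obtain the conclusion successively for $\tau(P_1 P_2) \leq 2 c_0, 4 c_0, \ldots, 2^k c_0$ until $2^k c_0 \geq c$.

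The main technicality I expect to handle is the non-emptiness hypothesis $\PPP_{c_1} \neq \emptyset$ required by \cref{include_comm_induction} at each doubling step with $c_1 = 2^j c_0 \in (0, c)$. For a semifinite type II factor, the trace is continuous on the projection lattice, so any $c_1 \in (0, \tau(I))$ is realised. For a type I$_n$ factor with $n \geq 3$, the values $c$ and $c_0 = \tau(I) - 2c$ are positive integer multiples of the minimal trace value, hence every $2^j c_0 < c$ is still an admissible trace value and $\PPP_{c_1}$ is non-empty. Type I$_\infty$ and type II$_\infty$ fall automatically into the first case since $\tau(I) = \infty \geq 3c$. Once this bookkeeping is settled, the iteration terminates in finitely many steps, and no deeper obstacle is anticipated beyond the structural content already contained in \cref{include_comm_ind_step} and \cref{include_comm_induction}.
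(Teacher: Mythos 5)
Your proposal is correct and follows essentially the same route as the paper: the paper's proof is exactly the dichotomy on whether $c_0=\min(\tau(I)-2c,c)$ equals $c$, using \cref{include_comm_ind_step} as the base case and iterating \cref{include_comm_induction} otherwise. The only difference is that you spell out the doubling iteration and the non-emptiness of $\PPP_{2^j c_0}$, which the paper leaves implicit ("it is clear that $\PPP_{c_0}\neq\emptyset$"); your bookkeeping for the type I$_n$ and type II cases is accurate.
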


\begin{proof}
    By \cref{include_comm_ind_step}, we assume that $0 \neq c_0 = \tau(I) - 2c < c$.
    It is clear that $\PPP_{c_0} \neq \emptyset$. Then \cref{include_comm_induction} implies the
    result.
\end{proof}

Let $2 \leq m \in \N$ such that $\PPP_{c/m} \neq \emptyset$.

\begin{lemma}\label{decom_exist}
    For every $Q \in \PPP_{c/m}$, there exist $m+1$ mutually commutative projections $P_i\in\PPP_c$,
    $i = 0, \ldots, m$,
    such that
    \begin{align*}
        Q = \frac{1}{m}[P_1 + \cdots +P_m - (m-1)P_{0}].
    \end{align*}
   \end{lemma}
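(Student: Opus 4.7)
The plan is to construct $P_0$ orthogonal to $Q$, then break $P_0$ into $m$ equal pieces and combine each with $Q$ to produce the $P_i$'s.

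First I would choose a projection $P_0 \in \PPP_c$ with $P_0 \perp Q$. Since $c \in (0, \tau(I)/2)$ and $\tau(Q) = c/m \le c$, we have $\tau(Q) + c \le 2c < \tau(I)$, so such a $P_0$ exists in the semifinite factor $\AAA$. Next, using the hypothesis $\PPP_{c/m} \neq \emptyset$ together with the fact that in a semifinite factor any two projections of equal finite trace are Murray--von Neumann equivalent (and any projection can be cut into pieces of smaller prescribed traces whenever these are attained), I would decompose
\begin{align*}
    P_0 = R_1 + R_2 + \cdots + R_m, \quad R_j \in \PPP_{c/m}, \quad R_i R_j = 0 \text{ for } i \neq j.
\end{align*}

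Then I would define, for $i = 1, \ldots, m$,
\begin{align*}
    P_i = Q + \sum_{j \neq i} R_j.
\end{align*}
Since $Q \perp P_0 \geq R_j$ and the $R_j$'s are pairwise orthogonal, $P_i$ is an orthogonal sum of projections in $\{Q, R_1, \ldots, R_m\}$, hence a projection, with
\begin{align*}
    \tau(P_i) = \tau(Q) + (m-1) \tau(R_1) = \frac{c}{m} + (m-1) \cdot \frac{c}{m} = c.
\end{align*}
Moreover, every $P_i$ (including $P_0$) is a sum of projections from the mutually orthogonal family $\{Q, R_1, \ldots, R_m\}$, so the $P_i$'s all commute with one another.

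The remaining step is a direct computation:
\begin{align*}
    \sum_{i=1}^m P_i = mQ + \sum_{i=1}^m \sum_{j \neq i} R_j = mQ + (m-1)\sum_{j=1}^m R_j = mQ + (m-1)P_0,
\end{align*}
which rearranges to $Q = \frac{1}{m}\bigl[P_1 + \cdots + P_m - (m-1)P_0\bigr]$, as required. There is no real obstacle here; the only point requiring any care is ensuring the existence of the orthogonal $P_0 \in \PPP_c$ and of the finer decomposition of $P_0$ into pieces of trace $c/m$, both of which follow from $c < \tau(I)/2$ and from $\PPP_{c/m} \neq \emptyset$ in the semifinite factor $\AAA$.
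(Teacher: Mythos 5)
Your proof is correct and is essentially the same construction as the paper's: the paper directly picks a mutually orthogonal family $\{Q_i\}_{i=0}^m \subset \PPP_{c/m}$ with $Q = Q_0$ and sets $P_i = \sum_{j=0}^m Q_j - Q_i$, which coincides with your $P_0 = \sum_j R_j$ and $P_i = Q + \sum_{j\neq i} R_j$ after identifying $Q_j$ with $R_j$. The only difference is that you build the orthogonal family in two steps (first $P_0$, then its refinement), which changes nothing of substance.
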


\begin{proof}
    Note that $2c \leq \tau(I)$, there exists a family of orthogonal projections $\{Q_i\}_{i=0}^{m}
    \subset \PPP_{c/m}$ such that $Q = Q_0$. Let $P_{i} = \sum_{j=0}^{m}Q_j - Q_i$.
    Then we have
    \begin{align*}
        P_1 + \cdots +P_m - (m-1)P_{0} = m Q_0
    \end{align*}
     and $P_i\in\PPP_c$, $i=0, \ldots, m$.
\end{proof}

\begin{lemma}\label{decom_descri}
    Let $Q = \frac{1}{m}[P_1 + \cdots +P_m - (m-1)P_{0}] \in \PPP_{c/m}$, where
    $\{P_i\}_{i=0}^{m}$ is a family of projections in $\PPP_{c}$.
    Then there exists a family of orthogonal projections $\{Q_i\}_{i=0}^{m}
    \subset \PPP_{c/m}$ such that $Q = Q_0$ and $P_{i} = \sum_{j=0}^{m}Q_j - Q_i$.
\end{lemma}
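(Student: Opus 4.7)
The plan is to invert the construction of \cref{decom_exist} directly from the identity $mQ = P_1+\cdots+P_m-(m-1)P_0$. Set $B:=\sum_{i=1}^m P_i$, so that the hypothesis reads $B=mQ+(m-1)P_0$. The natural candidates are $Q_0:=Q$ and $Q_i:=R-P_i$ for $i\ge 1$, where $R:=P_0+Q$, so the two facts I need before even defining them are that $P_0$ and $Q$ are orthogonal and that each $P_i$ lies below $R$.

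To obtain $P_0\perp Q$, I would compress $B=\sum_{i=1}^m P_i$ by $Q$ from both sides. On one hand, $QBQ\le\sum_{i=1}^m Q=mQ$, and on the other hand $QBQ=mQ+(m-1)QP_0Q$. Subtracting gives $(m-1)QP_0Q\le 0$; since $QP_0Q\ge 0$ and $\tau$ is faithful this forces $QP_0Q=0$, hence $P_0Q=0$. Now $R=P_0+Q\in\PPP_{c+c/m}$ is genuinely a projection. For the bound $P_i\le R$ (any $i\ge 1$), the inequality $B=mQ+(m-1)P_0\le m(P_0+Q)=mR$ combined with $P_i\le B$ yields $(I-R)P_i(I-R)\le m(I-R)R(I-R)=0$, so $P_i(I-R)=0$, i.e.\ $P_i\le R$. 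Thus each $Q_i:=R-P_i$ ($i\ge 1$) is a projection of trace $c/m$, and by construction $P_i=R-Q_i=\sum_{j=0}^m Q_j-Q_i$ (once we know $\sum_j Q_j=R$). For $i=0$ the same identity reads $R-Q_0=R-Q=P_0$, as required.

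It remains to verify mutual orthogonality of $\{Q_i\}_{i=0}^{m}$. The orthogonality of $Q_0=Q$ with each $Q_i$ ($i\ge 1$) is automatic from $Q\perp P_0$ and $Q_i\le R-Q=P_0$. For the mutual orthogonality of $Q_1,\dots,Q_m$, substitute $P_i=R-Q_i$ into $B=\sum_{i=1}^m P_i=mQ+(m-1)P_0$: this gives $mR-\sum_{i=1}^m Q_i=Q+(m-1)R$, i.e.\ $\sum_{i=1}^m Q_i=R-Q=P_0$. Thus the positive operators $Q_1,\dots,Q_m$ sum to a projection, so $0=(\sum_i Q_i)^2-\sum_i Q_i=\sum_{i\ne j}Q_iQ_j$; taking the trace and using $\tau(Q_iQ_j)=\tau(Q_j^{1/2}Q_iQ_j^{1/2})\ge 0$ together with faithfulness of $\tau$ yields $Q_iQ_j=0$ for $i\ne j$. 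This also confirms $\sum_{j=0}^m Q_j=Q+P_0=R$, closing the argument.

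The only delicate step is the initial orthogonality $P_0\perp Q$; everything downstream is essentially a rewriting of the given identity once $R=P_0+Q$ is known to be a projection dominating all $P_i$. I do not expect to need any of the transition-probability machinery from \cref{sec2} or \cref{aux}---the lemma is a purely algebraic fact about projections in a semifinite factor, and the trace is used only through its faithfulness to promote $QP_0Q=0$ and $\sum_{i\ne j}\tau(Q_iQ_j)=0$ to strict operator equalities.
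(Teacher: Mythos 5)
Your proof is correct, and in fact it produces exactly the same family of projections as the paper: with $R=P_0+Q$ and $P_i'=P_i-Q$ one has $Q_i=R-P_i=P_0-P_i'$, which is the paper's definition. What differs is the verification machinery. The paper runs everything through the trace: it gets $Q\perp P_0$ and $Q\le P_i$ from $c=\sum_i\tau(QP_i)-(m-1)\tau(QP_0)$ together with $\tau(QP_i)\le\tau(Q)$, and then proves $P_i'\le P_0$, $Q_i\le P_j'$ and finally $Q_i\perp Q_j$ by three further rounds of the same kind of trace estimate. You instead use operator inequalities: compressing $\sum_i P_i=mQ+(m-1)P_0$ by $Q$ gives $(m-1)QP_0Q\le 0$, hence $QP_0Q=0$ (note that faithfulness of $\tau$ is not even needed here, since $QP_0Q\ge 0$ automatically); compressing by $I-R$ gives $P_i\le R$; and the single algebraic identity $\sum_{i\ge 1}Q_i=P_0$ then delivers all remaining orthogonality at once, via the standard fact that projections summing to a projection are mutually orthogonal. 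This is shorter and isolates the analytic input (positivity, plus one use of faithfulness, which could itself be avoided: $Q_i+Q_j\le P_0\le I$ already forces $Q_iQ_j=0$ for projections). Two cosmetic points: you invoke $Q_i\le P_0$ to get $Q_0\perp Q_i$ one sentence before you derive $\sum_{i\ge 1}Q_i=P_0$, so those two sentences should be swapped; and you should note explicitly that the $i=0$ case of the conclusion, $P_0=\sum_{j=0}^m Q_j-Q_0$, is precisely the identity $\sum_{i\ge 1}Q_i=P_0$ you have just established.
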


\begin{proof}
   Note that
   \begin{align*}
       c = m\tau(Q) = \tau(QP_1) + \cdots +\tau(QP_m) - (m-1)\tau(QP_0) \leq c - (m-1)\tau(QP_0).
   \end{align*}
    This implies that $Q \leq P_i$, $i = 1, \ldots, m$ and $Q \bot P_{0}$.
    Let $P_i = Q + P_i'$, $i = 1, \ldots, m$. Then we have $P_1' + \cdots + P_m' = (m-1)P_0$ and
    \begin{align*}
        (m-1)c =(m-1)\tau(P_0) =  \tau(P_1' P_0) + \cdots + \tau(P_m'P_0).
    \end{align*}
    Since $\tau(P_i') = (m-1)c/m$, $P_i' \leq P_0$, $i = 1, \ldots m$.
    Let $Q_i = P_0 - P_i'$, $i = 1, \ldots m$. Note that $\tau(Q_i) = c/m$,
    \begin{align*}
        \frac{(m-1)c}{m} = (m-1)\tau(Q_i) = \sum_{j \neq i}\tau(Q_iP_j') \leq  \frac{(m-1)c}{m}.
    \end{align*}
    We have $Q_i \leq P_j'$ if $i \neq j$, $i, j \in \{1, \ldots, m\}$. For $i \neq j$,
    \begin{align*}
        (m-1)\tau(Q_iQ_j) = \sum_{k=1}^{m} \tau(Q_i P_k' Q_j) = (m-2)\tau(Q_iQ_j).
    \end{align*}
    Therefore, $Q_i \bot Q_j$ for $i \neq j$ and $P_{i} = \sum_{j=0}^{m}Q_j - Q_i$.
\end{proof}

\begin{lemma}\label{def_half_lemma}
    Let $Q = \frac{1}{m}[P_1 + \cdots +P_m - (m-1)P_{0}] \in \PPP_{c/m}$, where
    $\{P_i\}_{i=0}^{m}$ is a family of projections in $\PPP_{c}$. Then
    $\frac{1}{m}[\varphi(P_1) + \cdots + \varphi(P_m) - (m-1)\varphi(P_{0})] \in \PPP_{c/m}$.
\end{lemma}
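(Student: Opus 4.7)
The plan is to use \cref{decom_descri} to replace the data $\{P_0,\ldots,P_m\}$ by a concrete orthogonal family $\{Q_i\}_{i=0}^m\subset \PPP_{c/m}$ with $Q=Q_0$ and $P_i=\sum_{j=0}^m Q_j-Q_i$. From this explicit form one immediately reads off that the $P_i$ pairwise commute, that $\tau(P_iP_j)=(m-1)c/m$ for $i\neq j$, and that $P_i\le P_j\vee P_k=\sum_{l=0}^m Q_l$ whenever $i,j,k$ are distinct. Invoking \cref{P_Q_comm_general_case} and \cref{include_two_case} then transports both structures to the images: the projections $\varphi(P_0),\ldots,\varphi(P_m)$ pairwise commute and satisfy $\varphi(P_i)\le \varphi(P_j)\vee \varphi(P_k)$ for any three distinct indices.

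Next I would work entirely inside the abelian von Neumann subalgebra generated by $\varphi(P_0),\ldots,\varphi(P_m)$, parametrising atoms by
\[
R_S:=\prod_{i\in S}\varphi(P_i)\prod_{j\notin S}(I-\varphi(P_j)),\qquad S\subseteq\{0,\ldots,m\}.
\]
The inclusion $\varphi(P_i)\le \varphi(P_j)\vee\varphi(P_k)$ is equivalent to $\varphi(P_i)(I-\varphi(P_j))(I-\varphi(P_k))=0$, which forces $R_S=0$ as soon as $S\neq\emptyset$ and $|\{0,\ldots,m\}\setminus S|\ge 2$. Consequently, only the atoms $R_\emptyset$, $A_*:=R_{\{0,\ldots,m\}}$, and $A_i:=R_{\{0,\ldots,m\}\setminus\{i\}}$ for $i=0,\ldots,m$ can be nonzero, and every image decomposes as $\varphi(P_i)=A_*+\sum_{j\neq i}A_j$.

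The remainder is pure trace bookkeeping inside this finite atomic algebra. Comparing $\tau(\varphi(P_i))=c$ across $i$ forces $\tau(A_0)=\cdots=\tau(A_m)$, and combining this with $\tau(\varphi(P_i)\varphi(P_j))=(m-1)c/m$ pins the common value to $c/m$ and yields $\tau(A_*)=0$, so $A_*=0$ by faithfulness of $\tau$. Substituting the resulting formulas for $\varphi(P_i)$ into $\tfrac{1}{m}[\varphi(P_1)+\cdots+\varphi(P_m)-(m-1)\varphi(P_0)]$ causes all terms with $j\ge 1$ to cancel, leaving exactly $A_0$, a projection of trace $c/m$. I do not expect any genuine obstacle: the single non-obvious step is recognising that \cref{include_two_case} restricts the atomic structure of the abelian algebra generated by the images so severely that only $m+2$ atoms can survive; after that point the argument is routine linear algebra.
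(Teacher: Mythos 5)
Your proposal is correct and follows essentially the same route as the paper: reduce to the orthogonal family $\{Q_i\}_{i=0}^m$ via \cref{decom_descri}, transport the commutativity and the inclusions $P_i\le P_j\vee P_k$ through \cref{P_Q_comm_general_case} and \cref{include_two_case}, and then pin down the images by trace counting. The only difference is organizational: the paper builds the pieces $E_i=\varphi(P_0)-\varphi(P_0)\varphi(P_i)$ and $F_i$ by hand through successive trace computations, whereas you read the same decomposition off the atoms $R_S$ of the abelian algebra generated by the $\varphi(P_i)$ (your $A_i$ are exactly the paper's $E_i$ and $A_0=F_1$), which is a cleaner packaging of identical bookkeeping.
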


\begin{proof}
    By \cref{decom_descri}, we assume that $P_{i} = \sum_{j=0}^{m}Q_j - Q_i$ where
    $\{Q_i\}_{i=0}^{m} \subset \PPP_{c/m}$ is a family of orthogonal projections
    such that $Q = Q_0$. In particular, $P_i P_j = P_j P_i$. Thus $\varphi(P_i)\varphi(P_j)
    = \varphi(P_j)\varphi(P_i)$ by \cref{P_Q_comm_general_case}. Note that
    \begin{align*}
        \tau(\varphi(P_i)\varphi(P_j)) = \tau(P_iP_j) = \frac{(m-1)c}{m}, \quad i \neq j.
    \end{align*}
    Also \cref{include_two_case} implies that $\varphi(P_i) \leq \varphi(P_{j}) + \varphi(P_{k})
    - \varphi(P_j)\varphi(P_k)$ if $j \neq k$.
    Let $i, j \in \{1, \ldots m\}$. If $i \neq j$ and $i\cdot j\neq 0$,
    \begin{align*}
        c = \tau(\varphi(P_0)) &= \tau(\varphi(P_0)\varphi(P_i))
        +\tau(\varphi(P_0)\varphi(P_j))- \tau(\varphi(P_0)\varphi(P_i)\varphi(P_j)) \\
        &= \frac{2(m-1)c}{m} - \tau(\varphi(P_0)\varphi(P_i)\varphi(P_j)).
    \end{align*}
    We have $\tau(\varphi(P_0)\varphi(P_i)\varphi(P_j))) = \frac{(m-2)c}{m}$.
    Let $E_i = \varphi(P_0) - \varphi(P_0)\varphi(P_i)$, $i = 1, \ldots m$.
    Note that $\tau(E_i) = c/m$ and
    \begin{align*}
        \tau(E_i E_j) &= \tau([\varphi(P_0) - \varphi(P_0)\varphi(P_i)][
            \varphi(P_0) - \varphi(P_0)\varphi(P_j)])\\
        &= c - \frac{2(m-1)c}{m} + \frac{(m-2)c}{m} = 0, \quad i \neq j.
    \end{align*}
    Thus $E_i \bot E_j$ and $\varphi(P_0) = \sum_{i=1}^m E_i$. For $i, j \in \{1, \ldots, m\}$ and $i \neq j$,
    \begin{align*}
        \tau(\varphi(P_i) E_j) = \tau(\varphi(P_i)\varphi(P_0))
        - \tau(\varphi(P_0)\varphi(P_i)\varphi(P_j)) = \frac{c}{m}.
    \end{align*}
    Therefore, $E_j \leq \varphi(P_i)$ and $\varphi(P_i) = F_i +
    \sum_{j=1}^m E_j - E_i$, where $F_i \in \PPP_{c/m}$ and $F_i \varphi(P_0) = 0$.
    Note that
    \begin{align*}
        \frac{(m-1)c}{m} =  \tau(\varphi(P_i)\varphi(P_j)) = \tau(F_iF_j) + \frac{(m-2)c}{m}(i\neq j),
    \end{align*}
    we have $F_i = F_j$. Let $E_0 = F_1$, we have $\varphi(P_i) = \sum_{i=0}^m E_i - E_i$ and
    $\frac{1}{m}[\varphi(P_1) + \cdots + \varphi(P_m) - (m-1)\varphi(P_{0})] = E_0 \in \PPP_{c/m}$.
\end{proof}

\begin{lemma}\label{def_half_eq}
   Let $Q = \frac{1}{m}[P_1 + \cdots +P_m - (m-1)P_{0}]$ and
   $Q' = \frac{1}{m}[P_1' + \cdots +P_m' - (m-1)P_{0}']\in \PPP_{c/m}$, where
    $\{P_i\}_{i=0}^{m}$ and $\{P_i'\}_{i=0}^{m}$ are two families of projections
    in $\PPP_c$. Then
    \begin{equation}\label{pre_tran_pro}
    \begin{aligned}
        &\tau([\varphi(P_1) + \cdots +\varphi(P_m) - (m-1)\varphi(P_{0})]
        [\varphi(P_1') + \cdots +\varphi(P_m') - (m-1)\varphi(P_{0}')]\\
        =& \tau([P_1 + \cdots +P_m - (m-1)P_{0}][P_1' + \cdots +P_m' - (m-1)P_{0}']).
    \end{aligned}
    \end{equation}
    In particular, if $Q = Q'$ then
    \begin{align*}
        \frac{1}{m}[\varphi(P_1) + \cdots +\varphi(P_m) - (m-1)\varphi(P_{0})]
        =\frac{1}{m}[\varphi(P_1') + \cdots + \varphi(P_m') - (m-1)\varphi(P_0')].
    \end{align*}
\end{lemma}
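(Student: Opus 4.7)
The plan is to attack the two statements in sequence, deriving the second from the first via a standard trace-norm argument.

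For identity \eqref{pre_tran_pro}, the approach is just to expand both sides using bilinearity of the trace. Writing $A = \sum_{i=1}^m \varphi(P_i) - (m-1)\varphi(P_0)$ and $B = \sum_{j=1}^m \varphi(P_j') - (m-1)\varphi(P_0')$, the trace $\tau(AB)$ splits into a sum of $m^2 + 2m(m-1) \cdot (\pm 1) + (m-1)^2$ terms, each of the form $\pm \tau(\varphi(P_i)\varphi(P_j'))$ for indices $i,j \in \{0,1,\ldots,m\}$. Since all the $P_i,P_j'$ lie in $\PPP_c$ and $\varphi$ preserves the transition probability, every single one of these summands is equal to $\pm \tau(P_i P_j')$. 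Reassembling them produces exactly the expansion of $\tau([\sum_i P_i - (m-1)P_0][\sum_j P_j' - (m-1)P_0'])$, which proves \eqref{pre_tran_pro}.

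For the ``in particular'' part, write $R = \frac{1}{m}A$ and $R' = \frac{1}{m}B$. By \cref{def_half_lemma} both $R$ and $R'$ are projections in $\PPP_{c/m}$, hence self-adjoint, so $R - R'$ is self-adjoint and $(R-R')^2 \geq 0$. Under the assumption $Q = Q'$, we have $\tau(QQ') = \tau(Q^2) = \tau(Q) = c/m$, and applying \eqref{pre_tran_pro} yields
\begin{align*}
  \tau(RR') \;=\; \frac{1}{m^2}\tau(AB) \;=\; \frac{1}{m^2}\tau([\textstyle\sum_i P_i - (m-1)P_0][\sum_j P_j' - (m-1)P_0']) \;=\; \tau(QQ') \;=\; \frac{c}{m}.
\end{align*}
Combining this with $\tau(R^2) = \tau(R) = c/m = \tau(R') = \tau((R')^2)$, one computes
\begin{align*}
  \tau((R-R')^2) \;=\; \tau(R) - 2\tau(RR') + \tau(R') \;=\; \frac{c}{m} - \frac{2c}{m} + \frac{c}{m} \;=\; 0.
\end{align*}
Faithfulness of the tracial weight $\tau$ on the positive element $(R-R')^2$ (which has finite trace, hence lies in the domain of $\tau$) forces $(R-R')^2 = 0$, and therefore $R = R'$.

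The argument is mostly bookkeeping; the only point that demands any care is verifying that \eqref{pre_tran_pro} really does reduce to a term-by-term match once both sides are fully expanded, including the signs from the $-(m-1)P_0$ and $-(m-1)P_0'$ contributions. Once that is written out cleanly, the concluding $\tau((R-R')^2) = 0$ step is immediate.
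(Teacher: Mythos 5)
Your proposal is correct and follows essentially the same route as the paper: identity \eqref{pre_tran_pro} by bilinearity of $\tau$ together with $\tau(\varphi(P_i)\varphi(P_j'))=\tau(P_iP_j')$, and the ``in particular'' part by combining \cref{def_half_lemma} (both averages are projections in $\PPP_{c/m}$) with \eqref{pre_tran_pro} and faithfulness of $\tau$ applied to $(R-R')^2$. The paper merely states this in one line, so your write-up is a legitimate filling-in of the same argument.
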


\begin{proof}
    It is clear that \cref{pre_tran_pro} holds. If $Q = Q'$, \cref{def_half_lemma}
    implies that $\frac{1}{m}[\varphi(P_1) + \cdots +\varphi(P_m) - (m-1)\varphi(P_{0})]
        =\frac{1}{m}[\varphi(P_1') + \cdots + \varphi(P_m') - (m-1)\varphi(P_0')]$.
\end{proof}

We are now in a position to prove \cref{main_result}.

\section{Proof of \cref{main_result}}\label{main}

\begin{proof}[\bf{Proof of \cref{main_result}}]
    We first assume that $c \in (0, \tau(I)/2)$ and claim that there is a subset
    $S$ of $[0, c]$ and a map $\Phi: \PPP_S \rightarrow \PPP_S$
    satisfying the following conditions:
    \begin{enumerate}
        \item $c \in S$ and every projection $P \in \PPP$ is a sum of a family
            of pairwise orthogonal projections in $\PPP_S$,
        \item $\Phi$ preserves the transition probablility and $\Phi|_{\PPP_c} = \varphi$.
    \end{enumerate}
    
    If the claim holds, we can prove the theorem in this case by invoking \cref{thm_lemma}.
    To prove the claim, we consider the following two cases.

    Case I: $\AAA$ is a type I factor. If $\PPP_c$ is the set of minimal projections of $\AAA$, then
    $S = \{0, c\}$ and $\varphi$ satisfy the condition (1) and (2).

    Otherwise, there exists $2 \leq m \in \N$ such that $\PPP_{c/m}$ is the set of minimal projections
    of $\AAA$. Let $S = \{0, c/m, c\}$ and
    \begin{align*}
        \Phi: P \in \PPP_{S} \mapsto
        \begin{cases}
            \varphi(P), & P \in \PPP_c\\
            \varphi_1(P), & P \in \PPP_{c/m} \\
            0, & 0 \in \PPP_{0}
        \end{cases},
    \end{align*}
    where $\varphi_1$ is defined by
\begin{align*}
    \varphi_1 : \frac{1}{m}[P_1 + \cdots +P_m - (m-1)P_{0}] \mapsto
        \frac{1}{m}[\varphi(P_1) + \cdots + \varphi(P_m) - (m-1)\varphi(P_{0})],
\end{align*}
where $\{P_i\}$ is a familiy of mutually commutative projections in $\PPP_c$.
    By \cref{decom_exist}, \cref{def_half_lemma} and \cref{def_half_eq},
$S$ and $\Phi$ satisfy the conditions.

    Case II: $\AAA$ is a type II factor. Let $S = \{0\} \cup \{\frac{c}{2^n}: n = 0, 1, 2, \ldots\}$.
    Then it is easy to see that $\PPP_S$ satisfies condtion (1).
    Let $\varphi_0 = \varphi$. We can define a map
    $\varphi_n: \PPP_{c/2^n} \rightarrow \PPP_{c/2^n}$ which
    preserves the transition probability for every $n \in \N$ inductively by
    \begin{align*}
        \varphi_n: \frac{1}{2}(P_1 + P_2 - P_0) \in \PPP_{c/{2^{n}}} \mapsto
        \frac{1}{2}(\varphi_{n-1}(P_1) + \varphi_{n-1}(P_2) - \varphi_{n-1}(P_0)) \in \PPP_{c/{2^{n}}},
    \end{align*}
    where $\{P_i\}_{i=0}^{2}$ is a familiy of mutually commutative projections in $\PPP_{c/2^{n-1}}$.
    Let $\Phi$ be the map from $\PPP_S$ to $\PPP_S$ given by
    $\Phi(P) = \varphi_n(P)$ for $P \in \PPP_{c/2^n}$, $n = 0, 1, 2 \ldots$ and $\Phi(0) = 0$.
    Suppose that $Q \in \PPP_{c/2^{k}}$ and $Q' \in \PPP_{c/2^{l}}$, $k, l \in \N$,
    we have
    \begin{align*}
        \varphi_{k}(Q) = \frac{1}{2^{k}}\sum_{i}\lambda_i\varphi(P_i), \quad
        \varphi_{l}(Q') = \frac{1}{2^{l}}\sum_{j}\beta_j\varphi(P_j'),
    \end{align*}
    where $\{P_i\}$ and $\{P_j'\}$ are two families of projections in
    $\PPP_c$ and $\lambda_i$, $\beta_j \in \{1, -1\}$ such that
    $Q = \frac{1}{2^{k}}\sum_{i}\lambda_i P_i$ and $Q' = \frac{1}{2^{l}}\sum_j \lambda_j P_j'$.
    Then it is clear that $\tau(\Phi(Q)\Phi(Q')) = \tau(QQ')$.

    Now we assume that $\tau(I) < 2c$. Without loss of generality, we assume that $\tau(I) = 1$.
    It is clear that
\begin{align*}
    \psi: P \in \PPP_{1-c} \mapsto I-\varphi(I-P) \in \PPP_{1-c}
\end{align*}
is a map preserving the transition probability. Note that $2(1-c) < \tau(I)$. The discussion above implies
there exists a projection $E \in \PPP$ and a $\sigma$-weak continuous *-homomorphims
$\sigma_1:\AAA \rightarrow E \AAA E$ and
    a $\sigma$-weak continuous *-anti-homomorphism $\sigma_2: \AAA \rightarrow (I-E)\AAA(I-E)$ such that
    $\psi(P) = \sigma_1(P) + \sigma_2(P)$ for every $P \in \PPP_{1-c}$ and $\sigma_1$ or $\sigma_2$ is
    zero map if $\AAA$ is a type I factor. Therefore
    \begin{align*}
        \varphi(Q) = I- \psi(I-Q) = I - \sigma_1(I-Q) - \sigma_2(I-Q) = \sigma_1(Q) + \sigma_2(Q),
        \quad \forall Q \in \PPP_c.
    \end{align*}
\end{proof}

\begin{corollary}
    Let $\AAA$ be a semifinite factor and $c \in (0, \tau(I)) \setminus \{\tau(I)/2\}$.
    If $\varphi:\PPP_{c} \rightarrow \PPP_{c}$ is
    a surjective map preserving the transition probability, then there exists
    a *-automorphism or a *-anti-automorphism $\sigma$ of $\AAA$ such that,
    $\varphi(P) = \sigma(P)$ for every $P \in \PPP_{S}$.
\end{corollary}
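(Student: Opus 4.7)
The plan is to apply \cref{main_result} and then use surjectivity to collapse the decomposition $\varphi = \sigma_1 + \sigma_2$ into a single *-automorphism or *-anti-automorphism. Applying \cref{main_result} produces orthogonal projections $E_1, E_2 \in \AAA$ with $\sigma_i(I) = E_i$, a $\sigma$-weakly continuous *-homomorphism $\sigma_1 : \AAA \to E_1 \AAA E_1$, and a $\sigma$-weakly continuous *-anti-homomorphism $\sigma_2 : \AAA \to E_2 \AAA E_2$ satisfying $\varphi(P) = \sigma_1(P) + \sigma_2(P)$ and $\tau = \tau \circ (\sigma_1 + \sigma_2)$. When $c > \tau(I)/2$ (which forces $\AAA$ to be finite), I would first replace $\varphi$ by $\psi : P \mapsto I - \varphi(I - P)$ on $\PPP_{\tau(I)-c}$, which is again surjective and transition-probability preserving, in order to reduce to $c < \tau(I)/2$.

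Next I would force $E_1 + E_2 = I$ and then $E_1 \in \{0, I\}$. Every $\varphi(Q)$ lies below $E_1 + E_2$, so surjectivity gives $P \leq E_1 + E_2$ for all $P \in \PPP_c$; if $F := I - E_1 - E_2 \neq 0$, I would pick a nonzero projection $P_0 \leq F$ with $\tau(P_0) \leq c$ and extend it to some $P \in \PPP_c$ with $P \geq P_0$, producing a nonzero component outside $E_1 + E_2$, a contradiction. Because $\sigma_1(Q) = E_1 \varphi(Q) E_1$ and $\sigma_2(Q) = E_2 \varphi(Q) E_2$, every $\varphi(Q)$ commutes with $E_1$, and surjectivity propagates this to all of $\PPP_c$. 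If $0 < E_1 < I$, I would construct $P \in \PPP_c$ not commuting with $E_1$: take nonzero subprojections $p \leq E_1$ and $q \leq E_2$ of equal trace and a partial isometry $v \in \AAA$ with $v^*v = p$, $vv^* = q$, observe that $R_0 = \tfrac12(p + q + v + v^*)$ is a projection satisfying $[R_0, E_1] = \tfrac12(v - v^*) \neq 0$, and then extend $R_0$ to a projection $P \in \PPP_c$ by adding an orthogonal projection commuting with $E_1$ (choosing $\tau(p)=\tau(q)$ so this extension fits in every configuration of $\tau(E_1), \tau(E_2)$). This contradiction forces one of $\sigma_1, \sigma_2$ to vanish.

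Without loss of generality $\sigma_2 = 0$, so $E_1 = I$ and $\sigma_1 : \AAA \to \AAA$ is a unital normal *-homomorphism with $\sigma_1|_{\PPP_c} = \varphi$. Its image is a $\sigma$-weakly closed *-subalgebra of $\AAA$ containing $\varphi(\PPP_c) = \PPP_c$; since $\PPP_c$ is invariant under every inner automorphism of the factor $\AAA$, the von Neumann algebra it generates is $\AAA$ itself, whence $\sigma_1$ is surjective. The kernel of $\sigma_1$ is a $\sigma$-weakly closed two-sided ideal of the factor $\AAA$, hence $\{0\}$ by unitality, so $\sigma_1$ is a *-automorphism of $\AAA$, as required. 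The main obstacle I anticipate is the partial-isometry construction in the middle paragraph, which has to produce a projection of trace exactly $c$ for every configuration of $\tau(E_1), \tau(E_2) \in (0, \tau(I))$; this requires a short case analysis on whether $\tau(E_i) \geq c$ or not, but is routine in semifinite factors.
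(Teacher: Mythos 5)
Your proposal is correct and follows essentially the same route as the paper: the paper's proof simply invokes \cref{main_result} together with the observation that any operator commuting with every $P\in\PPP_c$ is central, which (by surjectivity of $\varphi$) forces $E_1,E_2\in\{0,I\}$ and collapses $\sigma_1+\sigma_2$ to a single *-automorphism or *-anti-automorphism. Your partial-isometry construction of a projection in $\PPP_c$ not commuting with a non-central $E_1$, and the final surjectivity/injectivity argument for $\sigma_1$, are just the details the paper leaves implicit in that one-line remark.
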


\begin{proof}
    If $\PPP_c = \emptyset$, the corollary holds trivially. Assume that $\PPP_c \neq \emptyset$.
    Note that if $A \in \AAA$ commutes with every $P \in \PPP_c$, then $A$ is in the center of $\AAA$.
    Since $\AAA$ is a factor, \cref{main_result} implies the result.
\end{proof}


\end{document}